 \newlength{\baseunit}               
\newtheorem*{tmnl}{Theorem}
\newtheorem*{conj}{Conjecture}
\newtheorem{tm}{Theorem}
\newtheorem{pr}[tm]{Proposition}
\newtheorem{lm}[tm]{Lemma}
\newtheorem{co}[tm]{Corollary}
\theoremstyle{definition}
\newtheorem{df}[tm]{Definition}
\theoremstyle{remark}
\newtheorem{rmk}[tm]{Remark}
\newcounter{nootje}
\newcommand{\calC}{{ \mathcal C}}
\newcommand{\calJ}{{ \mathcal J}}
\newcommand{\calM}{{ \mathcal M}}
\newcommand{\calO}{{ \mathcal O}}
\newcommand{\ODF}{{\mathcal{O}(\mathcal{D}(\phi))}}
\newcommand{\Jb}{\overline{\mathcal{J}}}
\newcommand{\DR}{\operatorname{DR}}
\begin{document}
\pagestyle{plain}
\title{Extending the Double Ramification Cycle using Jacobians}

\subjclass[2010]{Primary 14H40; Secondary 14K30, 14D20. }
\author{David Holmes}
\address{Mathematisch Instituut Leiden, Leiden, NL}
\email{holmesdst@math.leidenuniv.nl}
\author{Jesse Leo Kass}
\address{Dept. of Mathematics, University of South Carolina, Columbia~SC}
\email{kassj@math.sc.edu}
\author{Nicola Pagani}
\address{Dept. of Mathematical Sciences, University of Liverpool, UK}
\email{pagani@liv.ac.uk}
\begin{abstract}
We prove that the extension of the double ramification cycle defined by the first-named author (using modifications of the stack of stable curves) coincides with one of those defined by the last-two named authors (using an extended Brill--Noether locus on a suitable compactified universal Jacobians). In particular, in the untwisted case we deduce that both of these extensions coincide with that constructed by Li and Graber--Vakil using a virtual fundamental class on a space of rubber maps.

\end{abstract}

\maketitle

{\parskip=12pt 

\section{Introduction}

Let $g$ and $n$ be fixed natural numbers with $g,n\geq 1$. Given a fixed nontrivial vector of integers $(k; a_1, \dots, a_n)$  such that $k(2-2g)+ \sum a_i =0$, the \emph{(uncompactified, twisted) double ramification cycle} $\DR \subset \calM_{g, n}$ is defined to be the closed locus of the moduli space $\calM_{g,n}$ of smooth $n$-pointed curves of genus $g$ that consists of those pointed curves $(C, p_1, \dots, p_n)$ such that the line bundle $\omega_C^{- \otimes k}(a_1 p_1 + \dots a_n p_n)$ is trivial.   There are several approaches to extending $\DR$ to a Chow class on $\overline{\calM}_{g, n}$ and then computing this class.

Here we focus on approaches that use the universal Jacobian. Let $\calJ^0_{g,n}$ be the universal Jacobian parameterizing smooth $n$-pointed curves of genus $g$ together with a line bundle of degree zero. The morphism $\sigma\colon \calM_{g,n} \to \calJ^0_{g,n}$ defined by
\begin{equation} \label{sigma}
\sigma ([C, p_1, \ldots, p_n]) = \left[C, p_1, \ldots, p_n, \omega_C^{- \otimes k}(a_1 p_1 + \ldots + a_n p_n)\right]
\end{equation}
is a section of the forgetful morphism $\calJ^0_{g,n} \to \calM_{g,n}$ and the double ramification cycle equals $\sigma^{-1}(E)$, for $E$ the closed locus of $\calJ^0_{g,n}$ that corresponds to the trivial line bundle.

A natural generalization of this approach over $\overline{\calM}_{g,n}$ runs as follows. This time we consider the \emph{multidegree zero universal Jacobian} $\calJ^{\underline{0}}_{g,n}$ (also known in the literature as the \emph{generalized Jacobian}), defined as the moduli stack parameterizing stable $n$-pointed curves of arithmetic genus $g$ together with a line bundle with trivial multidegree (\emph{i.e.}~with degree zero on every irreducible component of every fiber). The stack $\calJ^{\underline{0}}_{g,n}$ still contains the closed locus $E$ that parameterizes trivial line bundles and comes with a  forgetful morphism $p$ to $\overline{\calM}_{g,n}$, but Rule~\eqref{sigma} in general fails to define a morphism and only defines a rational map $\sigma \colon \overline{\calM}_{g,n} \dashedrightarrow \calJ^{\underline{0}}_{g,n}$.

Holmes proposed a way to resolve the indeterminacy of $\sigma$ by modifying the source $\overline{\calM}_{g,n}$. In \cite[Corollary~4.6]{holmes} he constructs a ``minimal'' morphism (see Section~\ref{davidsection}) of normal Deligne--Mumford stacks  $\pi^{\lozenge} \colon \calM^{\lozenge}_{g,n} \to \overline{\calM}_{g,n}$ such that $\pi^{\lozenge  -1}(\calM_{g,n})$ is dense in $\calM^{\lozenge}_{g,n}$ and the rational map $\sigma \circ \pi^{\lozenge}$  extends  (uniquely) to a regular embedding \emph{morphism} \[\sigma^{\lozenge} \colon \calM_{g,n}^\lozenge \to \calJ_{g,n}^{\underline{0}}\times_{\overline{\calM}_{g,n}} \calM_{g,n}^\lozenge.\] Whilst $\pi^{\lozenge}$ is (in general) not proper, Holmes observed that the scheme-theoretic pullback ${\sigma}^{ \lozenge -1}(E)$ \emph{is} proper, so it makes sense to consider the pushforward $\pi^{\lozenge}_*(\sigma^{\lozenge *}[E])$, which we will denote by $[DR^\lozenge]$. When $k=0$ he then proved the equality of Chow classes $[DR^\lozenge]~=~[DR_{LGV}]$,
where the right hand side is the extension of the double ramification cycle to $\overline{\calM}_{g,n}$ due to Li \cite{li01,li02} and Graber--Vakil \cite{graber}. This latter extension is obtained as the pushforward of a certain virtual class defined on a moduli stack $\overline{\calM}_{g,n}(\mathbb{P}^1, \underline{a})^{\sim}$ of rubber maps to $\mathbb{P}^1$, and its class has been computed in terms of standard tautological classes by Janda, Pandharipande, Pixton and Zvonkine, proving an earlier conjecture by Pixton, see \cite{jppz}.

Kass and Pagani proposed another way of resolving the indeterminacy of the rational map $\sigma$ by modifying the target $\calJ^{\underline{0}}_{g,n}$.  In \cite[Section~4]{kasspa2} they constructed, for each nondegenerate $\phi$ in a certain stability vector space $V_{g,n}^0$, a compactified universal Jacobian $\Jb_{g,n}(\phi)$ parameterizing $\phi$-stable rank $1$ torsion-free sheaves on stable pointed curves. They propose extending $E$ to $\Jb_{g,n}(\phi)$ as a Brill--Noether class $w(\phi)$ (a class $w^r_d$ with $d=r=0$).  This produces infinitely many extensions $[DR(\phi)]$, one for each nondegenerate  $\phi \in V_{g,n}^0$, by pulling back $w(\phi)$ along the correspondence induced by the rational map $\sigma(\phi) \colon \overline{\calM}_{g,n} \dashedrightarrow \Jb_{g,n}(\phi)$. See Section~\ref{kasspagani} for more details.

The main result of this paper is
\begin{tmnl} (Theorem~\ref{main})
If  $\phi \in V_{g,n}^0$ is nondegenerate and such that the inclusion $\calJ_{g,n}^{\underline{0}} \subseteq \Jb_{g,n}(\phi)$ holds, we have $[DR(\phi)] = [DR^\lozenge]$.
\end{tmnl}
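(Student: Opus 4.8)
The plan is to use Holmes' modification $\calM^{\lozenge}_{g,n}$ as a \emph{common} resolution computing both classes, exploiting the fact that the hypothesis $\calJ^{\underline 0}_{g,n} \subseteq \Jb_{g,n}(\phi)$ realises the multidegree-zero Jacobian as an open substack $\iota \colon \calJ^{\underline 0}_{g,n} \hookrightarrow \Jb_{g,n}(\phi)$. The two constructions differ only in the \emph{direction} of resolution (Holmes modifies the source, Kass--Pagani modify the target) and in the class being pulled back ($[E]$ versus $w(\phi)$); I would reconcile them by showing that Holmes' resolved section $\sigma^{\lozenge}$, whose Jacobian component factors through $\calJ^{\underline 0}_{g,n}$, simultaneously resolves $\sigma(\phi)$ once composed with $\iota$, and that the two classes agree after restriction to the open substack.

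The first key step is the class identity
\[
\iota^{*} w(\phi) = [E] \quad \text{in } A_{*}(\calJ^{\underline 0}_{g,n}).
\]
Set-theoretically this is immediate: a multidegree-zero line bundle $L$ on a connected nodal curve with $h^{0}(L) \geq 1$ admits a nowhere-vanishing global section (on a connected curve a section of a multidegree-zero bundle is either identically zero or nowhere zero), hence $L \cong \calO$, so the Brill--Noether locus $w^{0}_{0}$ meets $\calJ^{\underline 0}_{g,n}$ exactly along the unit section $E$. To upgrade this to an equality of cycles I would check that both sides are pure of the expected codimension $g$ and that the degeneracy-locus scheme structure defining $w(\phi)$ is generically reduced along $E$; since $E$ is the unit section of a smooth group stack over $\overline{\calM}_{g,n}$ it is reduced of codimension $g$, so the Thom--Porteous class of $w(\phi)$ restricts with multiplicity one. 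This is the one place where the precise sheaf-theoretic definition of $w(\phi)$ must be unwound.

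The second key step is resolution compatibility. Because $\iota$ is an open immersion and the Jacobian component of $\sigma^{\lozenge}$ lands in $\calJ^{\underline 0}_{g,n}$, the morphism $\iota \circ \sigma^{\lozenge} \colon \calM^{\lozenge}_{g,n} \to \Jb_{g,n}(\phi) \times_{\overline{\calM}_{g,n}} \calM^{\lozenge}_{g,n}$ is a resolution of the rational map $\sigma(\phi) \circ \pi^{\lozenge}$. I would then invoke (or establish) independence of the Kass--Pagani pullback from the chosen resolving correspondence: passing to a common refinement of $\calM^{\lozenge}_{g,n}$ and the correspondence used in \cite{kasspa2}, the refined Gysin pullbacks of $w(\phi)$ agree. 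Granting this, $[DR(\phi)]$ may be computed on $\calM^{\lozenge}_{g,n}$ as $\pi^{\lozenge}_{*}\,(\iota \circ \sigma^{\lozenge})^{*} w(\phi) = \pi^{\lozenge}_{*}\,\sigma^{\lozenge *}\iota^{*} w(\phi)$, which by the first step equals $\pi^{\lozenge}_{*}\,\sigma^{\lozenge *}[E] = [DR^{\lozenge}]$. The pushforward is legitimate because $\sigma^{\lozenge -1}(E)$ is proper over $\overline{\calM}_{g,n}$, and the support of $\sigma^{\lozenge *}\iota^{*}w(\phi)$ lies in this locus.

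The hard part, I expect, is the resolution-independence in the third step: matching the refined pullback that Kass--Pagani use on their correspondence with the pullback computed on Holmes' (non-proper) modification $\calM^{\lozenge}_{g,n}$. One must verify that no cycle-theoretic contribution is lost or gained along the boundary difference $\Jb_{g,n}(\phi) \setminus \calJ^{\underline 0}_{g,n}$ and that the failure of $\pi^{\lozenge}$ to be proper does not obstruct the comparison --- precisely the point where the properness of $\sigma^{\lozenge -1}(E)$ (and its analogue for $w(\phi)$) must be used to localise the computation to a proper sublocus before pushing forward.
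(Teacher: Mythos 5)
Your proposal follows the same skeleton as the paper's proof (identify the Brill--Noether class with the class of the zero section, then compare the two constructions through Holmes' space $\calM^{\lozenge}_{g,n}$), but it has two genuine gaps, and they are exactly the points where the paper does its real work. The first gap: your class identity $\iota^{*}w(\phi) = [E]$ is established only on the open substack $\calJ^{\underline{0}}_{g,n}$, and that is strictly weaker than what is needed. By Equation~\eqref{fancypullback}, $[DR(\phi)]$ is the pushforward of $[\overline{\Sigma}(\phi)]\cdot w(\phi)$, an intersection computed on the \emph{compactified} Jacobian, and the closure $\overline{\Sigma}(\phi)$ in general meets the boundary $\Jb_{g,n}(\phi)\setminus\calJ^{\underline{0}}_{g,n}$ --- this is precisely the source of the indeterminacy of $\sigma$. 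Knowing $w(\phi)$ only on the open part leaves open the possibility that $w(\phi)$ has components supported on the boundary, coming from $\phi$-stable sheaves of nonzero multidegree, or sheaves not locally free at some node, which nevertheless satisfy $h^{0}>0$; such components would contribute to the intersection with $[\overline{\Sigma}(\phi)]$ and hence to $[DR(\phi)]$. This is not a hypothetical worry: the Remark following Lemma~\ref{expected} exhibits nondegenerate $\phi$ for which $W(\phi)$ contains the entire preimage of a boundary divisor. What rules this out under your hypothesis is Lemma~\ref{restriction} (Dudin's argument): for $\phi$ a nondegenerate small perturbation of $\underline{0}$, the stability inequality~\eqref{eqnsemistab}, played against the degree bound forced by the vanishing locus of a nonzero section, shows that $W(\phi)\subseteq\calJ^{\underline{0}}_{g,n}$, i.e.\ the support of $w(\phi)$ avoids the boundary entirely; only then does one get $w(\phi)=[E]$ as classes on all of $\Jb_{g,n}(\phi)$ (Corollary~\ref{coro}). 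Your connectedness argument (a section of a multidegree-zero line bundle is identically zero or nowhere zero) is the paper's Lemma~\ref{trivial}; it says nothing about boundary sheaves, and no formal ``resolution-independence'' statement can substitute for this stability computation.

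The second gap is that the step you call resolution independence requires two concrete inputs you do not supply. To identify $\overline{p}_{*}\left([\overline{\Sigma}(\phi)]\cdot [E]\right)$ with $\pi^{\lozenge}_{*}\sigma^{\lozenge *}[E]$, the paper applies the projection formula along $\widetilde{\pi}^{\lozenge}$, and this needs $\widetilde{\pi}^{\lozenge}_{*}[\Sigma^{\lozenge}] = [\Sigma]$, i.e.\ that $\Sigma^{\lozenge}\to\Sigma$ has degree one; the paper proves the stronger statement that it is the normalization (Lemma~\ref{normal}), using the universal property of $\calM^{\lozenge}_{g,n}$ in both directions (the normalization of $\Sigma$ is itself $\sigma$-extending, hence maps to $\calM^{\lozenge}_{g,n}$, and conversely). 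The normalization statement then also yields properness of the schematic intersection $\Sigma^{\lozenge}\cap E^{\lozenge}$ over $\overline{\calM}_{g,n}$ (Lemma~\ref{schematic}), which --- together with a compatible-compactifications argument --- is what legitimizes pushing cycle classes forward along the non-proper maps $\widetilde{\pi}^{\lozenge}$, $p^{\lozenge}$, $\pi^{\lozenge}$ and $p$. You correctly point at the properness of $\sigma^{\lozenge -1}(E)$ and correctly identify this comparison as the hard part, but as written the chain $[DR(\phi)] = \pi^{\lozenge}_{*}(\iota\circ\sigma^{\lozenge})^{*}w(\phi)$ is an assertion rather than a proof: with Lemma~\ref{restriction}, Lemma~\ref{normal} and Lemma~\ref{schematic} in hand it becomes the paper's argument.
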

Recall that when $k=0$, we know by \cite{holmes} that $[DR^\lozenge]= [DR_{LGV}]$, so all three extensions of DR coincide.

We prove the main result in Section~\ref{mainresult} by first showing the equality
\begin{equation} \label{half}
 ([DR^\lozenge] = ) \quad \pi^{\lozenge}_*({\sigma}^{\lozenge *}[E])= p_* ([\Sigma] \cdot [E]) \left(=: \sigma^* ([E]) \right)
\end{equation}
(for $E$ the zero section and $\Sigma$ the Zariski closure of the image of $\sigma$) and then by proving that, when $\phi$ satisfies the hypotheses of the theorem, the Brill--Noether class $w(\phi)$ coincides with the class $[E]$. Equation~\eqref{half} gives a geometric description of the double ramification cycle on $\overline{\calM}_{g,n}$ analogous to the equality $DR = \sigma^{-1}(E)$ on $\calM_{g,n}$, see Remark~\ref{analogous}.

In light of our theorem, \cite[Conjecture~1.4]{holmes} can be reformulated as a relation between the $[DR(\phi)]$'s and Pixton's $k$-twisted cycle $P_g^{g,k} (a_1+ k, \ldots, a_n+k)$ (\cite[Section~1.1]{jppz}).

\begin{conj}If  $\phi \in V_{g,n}^0$ is nondegenerate and such that the inclusion $\calJ_{g,n}^{\underline{0}} \subseteq \Jb_{g,n}(\phi)$ holds, then $[DR(\phi)] = 2^{-g} \cdot P_g^{g,k} (a_1+ k, \ldots, a_n+k)$.
\end{conj}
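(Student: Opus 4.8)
The plan is to reduce the conjecture to a statement already isolated in the literature and then to outline how the residual content might be established. By the Main Theorem (Theorem~\ref{main}), under the stated hypotheses on $\phi$ we have $[DR(\phi)] = [DR^\lozenge]$, so the asserted equality is equivalent to
\begin{equation*}
[DR^\lozenge] = 2^{-g}\cdot P_g^{g,k}(a_1+k,\ldots,a_n+k),
\end{equation*}
which is exactly \cite[Conjecture~1.4]{holmes}. Thus the only new input over Holmes's conjecture is the Main Theorem, which translates the Jacobian-theoretic class $[DR(\phi)]$ into the modification-theoretic class $[DR^\lozenge]$; the essential difficulty coincides with that of \cite[Conjecture~1.4]{holmes}, namely comparing $[DR^\lozenge]$ with Pixton's twisted cycle.

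First I would dispose of the untwisted case $k=0$, where every ingredient is available. There one has $[DR^\lozenge]=[DR_{LGV}]$ by \cite{holmes}, while $[DR_{LGV}]=2^{-g}\,P_g^{g,0}(a_1,\ldots,a_n)$ is the theorem of Janda--Pandharipande--Pixton--Zvonkine \cite{jppz}. Chaining these two equalities with the Main Theorem yields the conjecture for $k=0$ and every $\phi$ satisfying the hypotheses.

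For general $k$ I would attack $[DR^\lozenge]$ intrinsically on the Jacobian side through the geometric description~\eqref{half}, $[DR^\lozenge]=p_*\bigl([\Sigma]\cdot[E]\bigr)$. The aim is to express the class of the zero section $E$ inside the universal multidegree-zero Jacobian by means of a universal theta divisor $\Theta$ --- the section being controlled by the top self-intersection $\Theta^g$ --- then pull this back along $\sigma$ and push forward to $\overline{\calM}_{g,n}$. Stratifying $\overline{\calM}_{g,n}$ along its boundary and matching the contribution of each stratum, over which the fibres of $p$ degenerate into products of (generalised) Jacobians of the components, against the corresponding summand of Pixton's sum over stable graphs would, term by term, identify the two classes.

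The main obstacle is that for $k\neq 0$ there is no known realisation of $[DR^\lozenge]$ as the pushforward of a virtual class on a space of rubber maps, so the virtual-localisation and degeneration arguments that drive \cite{jppz} are unavailable; one is forced to evaluate the Jacobian intersection $\sigma^*[E]$ directly. The delicate point is bookkeeping along the modification $\pi^\lozenge\colon\calM^\lozenge_{g,n}\to\overline{\calM}_{g,n}$: the blow-ups introduced to render $\sigma$ regular reshape the boundary, and the excess contributions they produce must be shown to reassemble into precisely the polynomial dependence on $(a_1+k,\ldots,a_n+k)$, the prescribed vanishing above cohomological degree $g$, and the normalising factor $2^{-g}$ that characterise $P_g^{g,k}$. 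Demonstrating that the twist $k$ enters Pixton's graph sum in the same manner in which it enters the self-intersection computation on the twisted family of Jacobians --- rather than merely agreeing numerically in low genus --- is where I expect the genuine work to lie.
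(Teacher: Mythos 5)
The statement you are addressing is labelled a \emph{conjecture} in the paper, and the paper offers no proof of it: its entire content there is the observation that, thanks to Theorem~\ref{main}, it is \emph{equivalent} to \cite[Conjecture~1.4]{holmes}, i.e.\ to the equality $[DR^\lozenge] = 2^{-g}\cdot P_g^{g,k}(a_1+k,\ldots,a_n+k)$, which was open at the time of writing. Your first paragraph reproduces exactly this reduction, so that part matches the paper's intent. Your treatment of the untwisted case is also correct and complete: for $k=0$ one has $[DR^\lozenge]=[DR_{LGV}]$ by \cite{holmes} and $[DR_{LGV}]=2^{-g}\,P_g^{g,0}(a_1,\ldots,a_n)$ by \cite{jppz}, so chaining these with Theorem~\ref{main} settles the conjecture for $k=0$ and any $\phi$ satisfying the hypotheses --- a fact the paper itself records when it notes that all three extensions of $\DR$ coincide in the untwisted case.

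For $k\neq 0$, however, what you offer is a research program, not a proof, and you acknowledge as much. The genuine gap is this: the proposed ``stratify $\overline{\calM}_{g,n}$ along the boundary and match contributions term by term against Pixton's graph sum'' is a restatement of the desired equality rather than an argument. You give no mechanism for evaluating $p_*([\Sigma]\cdot[E])$ over a boundary stratum --- the class $[\Sigma]$ of the closure of the Abel--Jacobi image is precisely the uncontrolled object, its behaviour over the boundary being governed by the modification $\pi^\lozenge$, which your outline never makes explicit --- and no derivation of the polynomial dependence on $(a_1+k,\ldots,a_n+k)$, of the vanishing above degree $g$, or of the normalisation $2^{-g}$. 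Since the target was (at the time) an open conjecture, no blind attempt could have closed this gap; the honest assessment of your submission is: a correct reduction to \cite[Conjecture~1.4]{holmes} (which is exactly how the paper frames the statement), plus a correct proof in the case $k=0$, plus an unproven program for the twisted case.
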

 This conjecture provides a geometric interpretation of Pixton's cycle as the pull--back via the rational map $\sigma \colon \overline{\mathcal{M}}_{g,n} \dashedrightarrow \Jb_{g,n}(\phi)$ (for $\phi$ as in the theorem) of the class of the zero section $[E]$. 

In Section~\ref{consequences} we explain our approach to computing all classes $[DR(\phi)]$ (and, in particular, those mentioned in the theorem and in the conjecture). For $\phi \in V_{g,n}^0$ nondegenerate and such that the universal line bundle $\omega_C^{-\otimes k}(a_1p_1+\ldots+a_np_n)$ is $\phi$-stable, the class $[DR(\phi)]$ is computed, by applying cohomology and base change combined with the Grothendieck--Riemann--Roch formula applied to the universal curve, as the top Chern class class of a certain coherent sheaf on $\overline{\mathcal{M}}_{g,n}$. Computing all other $[DR(\phi)]$'s becomes then a matter of keeping track of how they get modified each time a stability hyperplane of $V_{g,n}^0$ is crossed. 


The double ramification cycle was first computed on the moduli space of curves
of compact type by Hain in \cite{hain13}. Grushevsky--Zakharov extended the calculation to the moduli space
of curves with at most one non-separating node in \cite{grushevsky14}.
Extensions of the double ramification cycle via log geometry were considered in the papers \cite{guere17}
and \cite{marcus17}. The latter supersedes the preprint arXiv:1310.5981, which, for $k=0$, proved the
equality of the double ramification cycles defined via Jacobians and via rubber maps over the locus
of curves of compact type. Another conjectural geometric interpretation of Pixton's  $k$-twisted cycle was given in \cite{farkaspanda} in terms of $k$-twisted canonical divisors.


Throughout we work over the field $\mathbb{C}$ of complex numbers.

\subsection{Acknowledgements}
DH would like to thank Orsola Tommasi and the third named author for inviting him to a wonderful workshop in G\"oteborg, where part of this work was carried out.

JLK was  supported by a grant from the Simons Foundation  (Award Number 429929) and by the National Security Agency under Grant Number H98230-15-1-0264.  The United States Government is authorized to reproduce and distribute reprints notwithstanding any copyright notation herein. This manuscript is submitted for publication with the understanding that the United States Government is authorized to reproduce and distribute reprints.

NP was supported by the EPSRC First Grant EP/P004881/1 with title  ``Wall-crossing on universal compactified Jacobians''. He would like to thank Alexandr Buryak for the useful feedback on an early version of this preprint. He is grateful to Paolo Rossi  for having initiated him to the DRC and for the invitation to a stimulating workshop in Dijon, and to Nicola Tarasca for several helpful discussions.

We thank the anonymous referee for his rapid and thorough review.
\section{Background}

\subsection{Review of Holmes' work on extending the Abel--Jacobi section} \label{Section: HolmesWork}

\label{davidsection}

Here we recall the definition of the \emph{universal $\sigma$-extending stack} $\calM^{\lozenge}_{g,n}$ over $\overline{\calM}_{g,n}$; note that this space depends on the vector $(k; a_1, \dots, a_n)$.

\begin{df} \label{sigmaext} We call a morphism $t\colon T \to \overline{\calM}_{g,n}$ from a normal Deligne--Mumford stack \emph{$\sigma$-extending} if $t^{-1}\calM_{g,n}$ is dense in $T$, and if the induced rational map $\sigma_T\colon T \dashrightarrow  \calJ^{\underline{0}}_{g,n}$ extends (necessarily uniquely) to a morphism $T \to  \calJ^{\underline{0}}_{g,n}$.  We define the \emph{universal $\sigma$-extending stack} $\calM^{\lozenge}_{g,n}$ to be the terminal object in the category of $\sigma$-extending morphisms to $\overline{\calM}_{g,n}$. \end{df}

The existence of a terminal object $\pi^{\lozenge} \colon \calM^{\lozenge}_{g,n} \to \overline{\calM}_{g,n}$ was established in \cite[Theorem~3.15]{holmes}, where $\pi^{\lozenge}$ was also shown to be representable by algebraic spaces, separated and birational (more precisely, an isomorphism over the locus of compact type curves). Furthermore, $\calM^{\lozenge}_{g,n}$ is naturally equipped with a log structure making it log \'etale over $\overline{\calM}_{g,n}$ (the latter comes with a natural log structure, called \emph{basic log structure}, from \cite{kato00}).

From Definition~\ref{sigmaext} we deduce the existence of a regular embedding \[\sigma^{\lozenge} \colon \calM_{g,n}^\lozenge \to \calJ_{g,n}^{\lozenge}:= \calJ_{g,n}^{\underline{0}}\times_{\overline{\calM}_{g,n}} \calM_{g,n}^\lozenge\] extending the rational section $\sigma\colon \overline{\calM}_{g,n} \dashedrightarrow \calJ_{g,n}^{\underline{0}}$. Writing $E$ for the schematic image of the zero section in $\calJ_{g,n}^{\lozenge}$, it was shown in \cite[Section 5]{holmes} that the closed subscheme $\sigma^{\lozenge -1} (E)$ of $\calM^{\lozenge}_{g,n}$ is \emph{proper} over $\overline{\calM}_{g,n}$.

Now the class $\sigma^{\lozenge *}[E]$ is by definition a Chow class on $\sigma^{\lozenge -1} (E)$ (c.f. \cite[Chapter 6]{fulton}, \cite[Definition 3.10]{Vistoli1989Intersection-th}). Since the latter is proper over $\overline{\calM}_{g,n}$, we can then push this class forward to $\overline{\calM}_{g,n}$. We define
\begin{equation} \label{deflozenge}
[DR^\lozenge] := \pi^{\lozenge}_*\left(\sigma^{\lozenge *} [E]\right).
\end{equation}
From \cite[Theorem~1.3]{holmes} we obtain when $k=0$ the equality  of Chow classes
\begin{equation} \label{holmes} [DR^\lozenge] = [DR_{LGV}]. \end{equation}

\subsection{Review of Kass--Pagani's work on $\phi$-stability} \label{kasspagani}

We first review the definition of the stability space $V_{g,n}^0$ from \cite[Definition~3.2]{kasspa2} and the notion of degenerate elements therein.  An element $\phi \in V_{g,n}^0$ is an assignment, for every  stable $n$-pointed curve $(C, p_1, \ldots, p_n)$ of genus $g$ and every irreducible component $C' \subseteq C$, of a real number $\phi(C, p_1, \ldots, p_n)_{C'}$  such that \[ \sum_{C' \subseteq C} \phi(C, p_1, \ldots, p_n)_{C'} = 0\] and such that
\begin{enumerate}
\item if $\alpha \colon (C, p_1, \ldots, p_n) \to (D, q_1, \ldots, q_n)$ is a homeomorphism of pointed curves, then $\phi(D, q_1, \ldots, q_n)= \phi(\alpha(C, p_1, \ldots, p_n))$; 
\item informally, the assignment $\phi$ is compatible with degenerations of pointed curves.
\end{enumerate}

The notion of $\phi$-(semi)stability  was introduced in \cite[Definition~4.1, Definition~4.2]{kasspa2}:

\begin{df} \label{semistab}
Given $\phi \in V_{g,n}^0$ we say that a family $F$ of rank~$1$ torsion-free sheaves of degree~$0$ on a family of stable curves is \emph{$\phi$-(semi)stable} if the inequality
\begin{equation} \label{eqnsemistab}
		\left| \deg_{C_0}(F)- \sum \limits_{C' \subseteq C_0} \phi(C, p_1, \ldots, p_n)_{C'} + \frac{\delta_{C_0}(F)}{2} \right| <  \frac{\#(C_0 \cap \overline{C_0^{c}})-\delta_{C_0}(F)}{2}  \  \text{ (resp.~$\le$).}
	\end{equation}
holds for every stable $n$-pointed curve $(C, p_1, \ldots, p_n)$ of genus $g$ of the family, and for every subcurve (\emph{i.e.}~a union of irreducible components) $\emptyset \subsetneq C_0 \subsetneq C$. Here $\delta_{C_{0}}(F)$ denotes the number of nodes $p \in C_0 \cap \overline{C_0^{c}}$ such that the stalk of $F$ at $p$ fails to be locally free.

A stability parameter $\phi \in V_{g,n}^0$ is \emph{nondegenerate} when there is no $F$, no $(C, p_1, \ldots, p_n)$ and no $\emptyset \subsetneq C_0 \subsetneq C$ as above where equality occurs in Equation~\eqref{eqnsemistab}.
\end{df}

For all $\phi \in V_{g,n}^0$ there exists a moduli stack $\Jb_{g,n}(\phi)$ of $\phi$-semistable sheaves on stable curves, which comes with a forgetful map $\overline{p}$ to $\overline{\calM}_{g,n}$. When $\phi$ is nondegenerate, by \cite[Corollary~4.4]{kasspa2} the stack $\Jb_{g,n}(\phi)$ is Deligne--Mumford and $\mathbb{C}$-smooth, and the morphism $\overline{p}$ is representable, proper and flat.

\subsection{Compactified universal Jacobians containing $\calJ_{g,n}^{\underline{0}}$} \label{pertzero}

For some stability parameters  $\phi \in V_{g,n}^0$ the corresponding compactified universal Jacobian $\Jb_{g,n}(\phi)$ contains the  multidegree zero universal Jacobian $\calJ_{g,n}^{\underline{0}}$:
\begin{df} \label{inclusion} A nondegenerate stability parameter  $\phi \in V_{g,n}^0$ is a  \emph{small perturbation of $\underline{0}$} when the inclusion $\calJ_{g,n}^{\underline{0}} \subseteq \Jb_{g,n}(\phi)$ holds.\end{df}

Following Definition~\ref{semistab} we  explicitly characterize the small perturbations of $\underline{0}$ in $V_{g,n}^0$.
\begin{co} \label{corsmallperturb} A nondegenerate $\phi \in V_{g,n}^0$ is a small perturbation of $\underline{0}$ if and only if for every stable $n$-pointed curve $(C,p_1, \ldots, p_n)$  of genus $g$   and every subcurve $\emptyset \subsetneq C_0 \subsetneq C$, the inequality
\begin{equation} \label{smallperturb}
\left|\sum_{C' \subseteq C_0}  \phi(C, p_1, \ldots, p_n)_{C'}\right| < \frac{\# C_0 \cap \overline{C_0^{c}}}{2}
\end{equation}
holds.
\end{co}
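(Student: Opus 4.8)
The plan is to read the corollary directly off Definition~\ref{semistab}, by specialising the $\phi$-(semi)stability inequality \eqref{eqnsemistab} to line bundles of trivial multidegree. The inclusion in Definition~\ref{inclusion} is really a pointwise condition in disguise, and once that is unwound the statement is a one-line substitution.

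First I would translate the stack-theoretic inclusion $\calJ_{g,n}^{\underline{0}} \subseteq \Jb_{g,n}(\phi)$ into a pointwise statement. Since $\Jb_{g,n}(\phi)$ is the moduli stack of $\phi$-semistable rank~$1$ torsion-free sheaves and $\calJ_{g,n}^{\underline{0}}$ is the moduli stack of line bundles of trivial multidegree, the inclusion holds if and only if every line bundle $L$ of trivial multidegree on every stable $n$-pointed curve $(C, p_1, \ldots, p_n)$ of genus $g$ is $\phi$-semistable. Here one uses that $\phi$-semistability of a family is tested on geometric fibres and subcurve by subcurve, so it suffices to range over line bundles one curve at a time. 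Because $\phi$ is nondegenerate, $\phi$-semistable coincides with $\phi$-stable, so the inclusion holds precisely when every such $L$ is $\phi$-stable.

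Next I would substitute $F = L$ into \eqref{eqnsemistab}. A line bundle is locally free at every point, so $\delta_{C_0}(L) = 0$ for every subcurve $C_0$; and $L$ has degree $0$ on each irreducible component, hence $\deg_{C_0}(L) = 0$ for every subcurve $C_0$. With these two vanishings the left-hand side of \eqref{eqnsemistab} becomes $\left|-\sum_{C' \subseteq C_0}\phi(C, p_1, \ldots, p_n)_{C'}\right| = \left|\sum_{C' \subseteq C_0}\phi(C, p_1, \ldots, p_n)_{C'}\right|$ and the right-hand side becomes $\#(C_0 \cap \overline{C_0^{c}})/2$, so the stability inequality is \emph{exactly} \eqref{smallperturb}. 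Thus $L$ is $\phi$-stable for all trivial-multidegree $L$ if and only if \eqref{smallperturb} holds for all pairs $(C, C_0)$. Nondegeneracy is what upgrades the a priori semistable ``$\le$'' to the strict ``$<$'' in the forward direction and, conversely, guarantees that the strict inequality \eqref{smallperturb} forces genuine stability rather than mere semistability. Reading this equivalence in both directions yields the corollary.

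I expect the only real subtlety to be the first step: justifying that the inclusion of stacks is detected fibre by fibre and subcurve by subcurve, and that restricting attention to line bundles (rather than to all rank~$1$ torsion-free sheaves) is legitimate because the locus $\calJ_{g,n}^{\underline{0}}$ consists precisely of such bundles. Once that reduction is in place, the corollary is an immediate specialisation of \eqref{eqnsemistab}.
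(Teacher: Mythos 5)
Your proof is correct and follows essentially the same route as the paper, whose one-line argument observes that the inclusion $\calJ_{g,n}^{\underline{0}} \subseteq \Jb_{g,n}(\phi)$ amounts to $\phi$-stability of the trivial (equivalently, any multidegree-zero) line bundle and then reads off Inequality~\eqref{smallperturb} from Definition~\ref{semistab} with $\deg_{C_0}(F)=0$ and $\delta_{C_0}(F)=0$. Your write-up merely makes explicit the pointwise unwinding and the semistable-equals-stable step under nondegeneracy, which the paper leaves implicit.
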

\begin{proof}
This follows from Definition~\ref{semistab} after observing that  $\phi$ is a nondegenerate small perturbation of $\underline{0}$ if and only if the trivial line bundle is $\phi$-stable.
\end{proof}
 By \cite[Section 5]{kasspa2} the degenerate locus of $V_{g,n}^0$ is a locally finite hyperplane arrangement (because we are assuming $n \geq 1$ throughout). By applying Corollary~\ref{corsmallperturb}, we deduce that the nondegenerate small perturbations of $\underline{0}$ form a nonempty open subset of $V_{g,n}^0$.

\subsection{Extensions of the double ramification cycle as a pullback of $w^0_0$} \label{drphi}

First we extend the Brill--Noether locus $W_0^0$ defined inside $\calJ_{g,n}^0$, as a Chow class $w^0_0$ on $\Jb_{g,n}(\phi)$. Because we are assuming $n \geq 1$, by combining \cite[Corollary~4.3]{kasspa2} and \cite[Lemma~3.35]{kasspa1} we deduce the existence of a tautological family $F_{\text{tau}}$ of rank $1$ torsion-free sheaves on the total space of the universal curve $\widetilde{q} \colon \Jb_{g,n}(\phi) \times_{\overline{\calM}_{g,n}} \overline{\calC}_{g,n} \to \Jb_{g,n}(\phi)$. We define the Brill--Noether class $w(\phi)$ as
\begin{equation} \label{w00}
w(\phi) = w^0_0(\phi) :=  c_g (-\mathbb{R} \widetilde{q}_* ( F_{\text{tau}}(\phi))).
\end{equation}
We will later see in Lemma~\ref{expected} that the class $w(\phi)$ is supported on the Brill--Noether locus
\begin{equation} \label{W00}
 W(\phi) = W^0_0(\phi) := \{(C, p_1, \ldots, p_n, F) : \ h^0(C, F) >0  \} \subset \Jb_{g,n}(\phi).
\end{equation}

Then, for each nondegenerate $\phi \in V_{g,n}^0$ we define the double ramification cycle to be the pullback of $w(\phi)$ via the correspondence induced by the rational map $\sigma \colon \overline{\calM}_{g,n} \dashedrightarrow \Jb_{g,n}(\phi)$. More explicitly
\begin{equation} \label{fancypullback}
[DR(\phi)] := \sigma^*(w(\phi)) = \overline{p}_*\left([\overline{\Sigma}(\phi)] \cdot  w(\phi)\right),
\end{equation}
where $\overline{\Sigma}(\phi)$ is the closure in $\Jb_{g,n}(\phi)$ of the image of the section $\sigma$ and $\overline{p}$ is the forgetful morphism.

\section{Main Result} \label{mainresult}

When $\phi$ is a nondegenerate small perturbation of $\underline{0}$ the approaches of Holmes and of Kass--Pagani can be directly compared. This will produce the main result of this paper.

\begin{tm} \label{main} For $\phi \in V_{g,n}^0$ a nondegenerate small perturbation of $\underline{0}$, we have the equality of classes $[DR(\phi)] = [DR^\lozenge]$.
\end{tm}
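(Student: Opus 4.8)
The plan is to follow the two-step strategy announced around Equation~\eqref{half}: first prove the purely ``Holmes-side'' identity $\pi^{\lozenge}_*(\sigma^{\lozenge *}[E]) = p_*([\Sigma]\cdot[E])$, where $\Sigma$ is the closure of the image of $\sigma$ in $\calJ^{\underline 0}_{g,n}$ and $[E]$ is the class of the zero section, and then show that under the small-perturbation hypothesis the Brill--Noether class $w(\phi)$ on $\Jb_{g,n}(\phi)$ agrees with $[E]$, so that the two correspondence pullbacks coincide. Since $\phi$ is a small perturbation of $\underline 0$, Corollary~\ref{corsmallperturb} gives an open immersion $\calJ^{\underline 0}_{g,n}\subseteq\Jb_{g,n}(\phi)$ under which both the zero section $E$ and the generic section $\Sigma$ land in the line-bundle locus. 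Once $w(\phi)=[E]$ is known and the relevant intersections are supported inside this open substack, the classes $[DR(\phi)]=\overline p_*([\overline\Sigma(\phi)]\cdot w(\phi))$ and $p_*([\Sigma]\cdot[E])$ are computed from the same local data (with $\overline\Sigma(\phi)\cap\calJ^{\underline 0}_{g,n}=\Sigma$ and $\overline\Sigma(\phi)\cap W(\phi)=\Sigma\cap E$ inside the open), giving $[DR(\phi)]=[DR^\lozenge]$.

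For Equation~\eqref{half} I would work with the refined Gysin homomorphism $i_E^{!}$ of the regular embedding $i_E\colon E\hookrightarrow\calJ^{\underline 0}_{g,n}$ (the zero section of the smooth group stack $\calJ^{\underline 0}_{g,n}\to\overline\calM_{g,n}$, hence regular of codimension $g$). Write $\tau\colon\calM^{\lozenge}_{g,n}\to\calJ^{\underline 0}_{g,n}$ for the composite of $\sigma^{\lozenge}$ with the projection $\calJ^{\lozenge}_{g,n}\to\calJ^{\underline 0}_{g,n}$, so that $p\circ\tau=\pi^{\lozenge}$ and $\tau$ is birational onto $\Sigma$ (it agrees with the embedding $\sigma$ over the dense open $\calM_{g,n}$, where $\pi^{\lozenge}$ is an isomorphism). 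Because the zero section in $\calJ^{\lozenge}_{g,n}$ is the pullback of $E$ and $\sigma^{\lozenge}$ is itself a section, hence a regular embedding, compatibility of Gysin maps identifies $\sigma^{\lozenge *}[E]$ with $i_E^{!}[\calM^{\lozenge}_{g,n}]$, a class living on $\sigma^{\lozenge -1}(E)=\tau^{-1}(E)$. Using $p\circ\tau=\pi^{\lozenge}$, it then remains to check $\tau_*\bigl(i_E^{!}[\calM^{\lozenge}_{g,n}]\bigr)=i_E^{!}[\Sigma]=[\Sigma]\cdot[E]$.

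The main obstacle is precisely this last pushforward: $\tau$ (equivalently $\pi^{\lozenge}$) is \emph{not} proper, so $\tau_*[\calM^{\lozenge}_{g,n}]$ is not defined and one cannot naively commute refined Gysin maps with proper pushforward. The point that rescues the argument is Holmes' properness result: $\sigma^{\lozenge -1}(E)=\tau^{-1}(E)$ is proper over $\overline\calM_{g,n}$, and since $E\to\overline\calM_{g,n}$ is separated (indeed an isomorphism) this forces the restriction $\tau\colon\tau^{-1}(E)\to E$ to be proper. The refined class $i_E^{!}[\calM^{\lozenge}_{g,n}]$ is supported on this proper locus, so its $\tau$-pushforward is defined; I would then invoke the functoriality of refined Gysin homomorphisms under proper pushforward (Fulton, Chapter~6 together with Prop.~1.7), applied to the restricted proper map, to move $\tau_*$ past $i_E^{!}$, and use birationality of $\tau$ onto $\Sigma$ to conclude $\tau_*i_E^{!}[\calM^{\lozenge}_{g,n}]=i_E^{!}[\Sigma]$, completing \eqref{half}.

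For the identity $w(\phi)=[E]$ I would argue in two stages. Set-theoretically, I claim $W(\phi)=E$: if $F$ is $\phi$-stable of total degree $0$ with $h^0(C,F)>0$, a nonzero section $\calO_C\to F$ is supported on a subcurve $C_0$ on which $\deg_{C_0}F\ge 0$, and feeding this bound into the small-perturbation inequality~\eqref{smallperturb} of Corollary~\ref{corsmallperturb} forces $C_0=C$ and $F\cong\calO_C$; hence $W(\phi)$ coincides with the zero section and is in particular disjoint from the boundary $\Jb_{g,n}(\phi)\setminus\calJ^{\underline 0}_{g,n}$. Then, on the open substack $\calJ^{\underline 0}_{g,n}$ the tautological sheaf $F_{\mathrm{tau}}(\phi)$ is the Poincaré line bundle and $w(\phi)=c_g(-\mathbb R\widetilde q_* F_{\mathrm{tau}}(\phi))$ is the Brill--Noether class $w^0_0$; its expected codimension $(r+1)(g-d+r)=g$ with $d=r=0$ matches the codimension of the smooth zero section $E$, and a Grothendieck--Riemann--Roch (Thom--Porteous) computation identifies this top Chern class with $[E]$ with multiplicity one. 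Combined with Lemma~\ref{expected}, which guarantees that $w(\phi)$ is supported on $W(\phi)=E$, this yields $w(\phi)=[E]$, and assembling the two stages gives $[DR(\phi)]=\overline p_*([\overline\Sigma(\phi)]\cdot w(\phi))=p_*([\Sigma]\cdot[E])=[DR^\lozenge]$.
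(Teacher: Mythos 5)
Your overall strategy is exactly the paper's: first establish Equation~\eqref{half}, then show $w(\phi)=[E]$, using Holmes' properness of $\sigma^{\lozenge -1}(E)$ and the birationality of $\calM^{\lozenge}_{g,n}$ onto $\Sigma$ (the paper packages these as Lemmas~\ref{normal} and~\ref{schematic}). However, two of your steps have genuine gaps.

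The first concerns the crucial pushforward. Fulton's compatibility of refined Gysin homomorphisms with proper pushforward is an identity of the form $i_E^{!}(\tau_*\alpha)=q_*(i_E^{!}\alpha)$, and its very formulation requires $\tau_*\alpha$ to be defined, i.e.\ requires the morphism carrying the \emph{input} cycle $\alpha$ to be proper; here $\alpha=[\calM^{\lozenge}_{g,n}]$ and $\tau$ is not proper. You cannot apply the theorem ``to the restricted proper map'' $\tau^{-1}(E)\to E$, because $[\calM^{\lozenge}_{g,n}]$ is not a cycle on $\tau^{-1}(E)$: only the \emph{output} of the Gysin map lives on the preimage of $E$. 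This is precisely the delicate point, and the paper devotes its main effort to it: it chooses compatible compactifications of the spaces involved, blows up the boundaries so that no extra intersection with $E$ is created (appealing to Hartshorne, Exercise~II.7.12), applies the ordinary push--pull formula for the resulting proper morphisms, and then observes that the answer is independent of the choices. Some argument of this kind is indispensable; as written, your central equality $\tau_*\bigl(i_E^{!}[\calM^{\lozenge}_{g,n}]\bigr)=i_E^{!}[\Sigma]$ is asserted rather than proved.

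The second gap is in the set-theoretic identification $W(\phi)=E$. The bound you use, $\deg_{C_0}F\ge 0$ on the subcurve supporting a nonzero section, is too weak to contradict stability: the $\phi$-stability inequality combined with the small-perturbation inequality~\eqref{smallperturb} only gives $\deg_{C_0}F<\#\bigl(C_0\cap\overline{C_0^{c}}\bigr)-\delta_{C_0}(F)$, which is perfectly compatible with nonnegative degree. For instance, on a two-component curve with two nodes, line bundles of bidegree $(1,-1)$ are $\phi$-stable for suitable small perturbations $\phi$, and your inequality does not exclude them from $W(\phi)$. What is needed (and what the paper uses) is the sharper bound $\deg_{C_0}F\ge\#\bigl(C_0\cap\overline{C_0^{c}}\bigr)$, coming from the fact that a nonzero section vanishes at every node where $C_0$ meets the complementary subcurve; only this contradicts stability and forces the section to be supported on all of $C$, after which an Euler-characteristic argument (the paper's Lemma~\ref{trivial}) gives $F\cong\calO_C$. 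Finally, your multiplicity-one step (``a GRR (Thom--Porteous) computation'') is vague, but easily repaired: once $W(\phi)=E$ is known and $E$ is irreducible, the second half of Lemma~\ref{expected} yields $w(\phi)=[W(\phi)]=[E]$ directly, which is the paper's Corollary~\ref{coro}.
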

Before proving the main result we prove some preparatory lemmas.

\begin{lm} \label{expected} For $\phi \in V_{g,n}^0$ nondegenerate, the class $w(\phi)$ is  supported on the locus $W(\phi)$.
If we additionally assume  that $W(\phi)$ is irreducible, then $w(\phi) = [W(\phi)]$.
\end{lm}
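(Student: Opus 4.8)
The plan is to realize $w(\phi)$ as a Thom--Porteous class for a map of vector bundles and to recognize $W(\phi)$ as the associated degeneracy locus. Since $\widetilde{q}$ is proper and flat of relative dimension one and $F_{\mathrm{tau}}(\phi)$ is $\widetilde{q}$-flat with one-dimensional fibres, $\mathbb{R}\widetilde{q}_*(F_{\mathrm{tau}}(\phi))$ is a perfect complex concentrated in degrees $0$ and $1$. First I would represent it by a two-term complex of vector bundles $\gamma\colon E_0 \to E_1$: choosing a $\widetilde{q}$-relatively ample divisor $D$ avoiding the non-locally-free points and with $R^1\widetilde{q}_*(F_{\mathrm{tau}}(\phi)(D)) = 0$, pushing forward the sequence $0 \to F_{\mathrm{tau}}(\phi) \to F_{\mathrm{tau}}(\phi)(D) \to F_{\mathrm{tau}}(\phi)(D)|_D \to 0$ exhibits $\mathbb{R}\widetilde{q}_*(F_{\mathrm{tau}}(\phi))$ as $[E_0 \to E_1]$ with $E_0 = \widetilde{q}_*(F_{\mathrm{tau}}(\phi)(D))$ and $E_1 = \widetilde{q}_*(F_{\mathrm{tau}}(\phi)(D)|_D)$. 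Because the sheaves have degree $0$, Riemann--Roch gives $\operatorname{rank} E_1 - \operatorname{rank} E_0 = g-1$, so $-\mathbb{R}\widetilde{q}_*(F_{\mathrm{tau}}(\phi)) = [E_1]-[E_0]$ has rank $g-1$ and $w(\phi) = c_g([E_1]-[E_0])$. By cohomology and base change, at a point $(C,p_1,\ldots,p_n,F)$ one has $h^0(C,F) = \dim\ker(\gamma)$, so $W(\phi)$ is exactly the locus where $\gamma$ fails to be injective, i.e.\ the degeneracy locus $D_{e_0-1}(\gamma)$ of corank at least one.

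For the support statement, note that on the open complement $U = \Jb_{g,n}(\phi) \setminus W(\phi)$ one has $h^0(C,F)=0$ identically; hence $R^0\widetilde{q}_* = 0$ and, as $h^1 = g-1$ is then constant, $R^1\widetilde{q}_*(F_{\mathrm{tau}}(\phi))$ is locally free of rank $g-1$ on $U$. Thus $-\mathbb{R}\widetilde{q}_*(F_{\mathrm{tau}}(\phi))|_U$ is an honest vector bundle of rank $g-1$, whose top Chern class $c_g$ vanishes. Therefore $w(\phi)|_U = 0$, so $w(\phi)$ is supported on $W(\phi)$ for every nondegenerate $\phi$.

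Now assume $W(\phi)$ is irreducible. Over the locus of smooth curves, $W(\phi)\cap \calJ^0_{g,n}$ consists of the degree-$0$ line bundles admitting a nonzero section; on a smooth curve such a bundle is trivial, so this locus is the image of the zero section over $\calM_{g,n}$ and has codimension $g$. Since $\calJ^0_{g,n}$ is open and dense in $\Jb_{g,n}(\phi)$ and $W(\phi)$ meets it, irreducibility forces $W(\phi)\cap \calJ^0_{g,n}$ to be dense in $W(\phi)$, whence $\operatorname{codim} W(\phi) = g$. The degeneracy locus $D_{e_0-1}(\gamma)$ thus attains its expected codimension $g$, and the corank-one Thom--Porteous formula \cite[Ch.~14]{fulton} gives $[D_{e_0-1}(\gamma)] = c_g([E_1]-[E_0]) = w(\phi)$, where the left-hand side carries its natural determinantal scheme structure. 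Finally, at a generic point of $W(\phi)$ the fibre is a smooth curve with $F = \calO_C$, so $h^0 = 1$ and $\gamma$ has corank exactly one; there the degeneracy scheme is smooth of the expected dimension and reduced, so $[D_{e_0-1}(\gamma)] = [W(\phi)]$ and $w(\phi) = [W(\phi)]$.

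I expect the main obstacle to be the construction of the global two-term complex $[E_0 \to E_1]$ together with the control of multiplicities: on the Deligne--Mumford stack $\Jb_{g,n}(\phi)$ one must either produce a global $\widetilde{q}$-relatively ample $D$ or argue \'etale-locally and glue the resulting Chern-class identity, and one must verify carefully that the degeneracy locus is generically reduced (equivalently, that the generic corank is exactly one) so that the Thom--Porteous class is the reduced fundamental class of $W(\phi)$ rather than a positive multiple of it.
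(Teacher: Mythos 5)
Your overall strategy is the same as the paper's: represent $\mathbb{R}\widetilde{q}_*(F_{\text{tau}}(\phi))$ by a two-term complex of vector bundles via a relatively ample divisor, identify $W(\phi)$ with the top degeneracy locus using base change, use irreducibility to pin the codimension of $W(\phi)$ at $g$, and invoke \cite[Theorem~14.4]{fulton}. The construction of the complex, the support statement (vanishing of $c_g$ of a rank $g-1$ bundle on the complement, plus excision) and the codimension count are all fine. The gap is in your very last step. Having corank exactly one at a point does \emph{not} imply that the determinantal scheme is smooth, or even reduced, there: take $\gamma = (x_1, \ldots, x_{g-1}, x_g^2) \colon \mathcal{O} \to \mathcal{O}^{\oplus g}$ on affine space; its top degeneracy scheme $V(x_1, \ldots, x_{g-1}, x_g^2)$ has the expected codimension $g$ and corank exactly one at every point of its support, yet it is everywhere non-reduced. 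In particular your parenthetical remark that generic reducedness is ``equivalently, that the generic corank is exactly one'' is false. What \cite[Theorem~14.4(c)]{fulton} gives you, once the codimension is the expected one and the ambient stack is smooth, is that the degeneracy scheme $D(\phi)$ is Cohen--Macaulay with $w(\phi) = [D(\phi)]$; a priori this is $m \cdot [W(\phi)]$ for some multiplicity $m \geq 1$, and nothing you have said rules out $m > 1$.

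To close the gap you need a genuine reducedness input at the generic point, and this is exactly where the paper cites \cite[Proposition~4.4]{arbarello}: the fibre of $D(\phi)$ over a point of $\calM_{g,n}$ is the classical Brill--Noether scheme $W^0_0(C) \subset \operatorname{Pic}^0(C)$, which is a single \emph{reduced} point --- reducedness coming from a tangent-space computation (the tangent space at $\mathcal{O}_C$ is the annihilator of the image of the Petri map $H^0(\mathcal{O}_C) \otimes H^0(\omega_C) \to H^0(\omega_C)$, which is surjective, so the tangent space is zero), not from the corank. Applying this over the generic point of $\overline{\calM}_{g,n}$ shows that $D(\phi)$ is generically reduced, and Cohen--Macaulayness then upgrades generic reducedness to reducedness, giving $D(\phi) = W(\phi)$ as schemes and hence $w(\phi) = [W(\phi)]$. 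With that one substitution your argument coincides with the paper's proof.
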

\begin{proof}
	This follows from a description of $w(\phi)$ as a degeneracy class together with general results about determinental subschemes (as developed in e.g.~\cite[Section~14.4]{fulton}).  Fix a $2$-term complex $d \colon \mathcal{E}_0 \to \mathcal{E}_1$ of vector bundles that represents $\mathbb{R}\widetilde{q}_* (F_{\text{tau}})$. (Such a complex can be constructed in an elementary manner using a fixed divisor $H$ on $\Jb_{g,n}(\phi) \times_{\overline{\calM}_{g,n}} \overline{\calC}_{g,n}$  that is sufficiently $\widetilde{q}$-relatively ample.  The sheaf $F_{\text{tau}}(\phi)$ fits into a short exact sequence $0 \to F_{\text{tau}}(\phi)
	\to F_{\text{tau}}(\phi) \otimes \mathcal{O}(H) \to F_{\text{tau}}(\phi) \otimes \mathcal{O}_{H}(H) \to 0$.  The (nonderived) direct image $\widetilde{q}_{*}F_{\text{tau}}(\phi) \otimes \mathcal{O}(H) \to \widetilde{q}_{*}F_{\text{tau}}(\phi) \otimes \mathcal{O}_{H}(H)$ is a complex with the desired properties.)   We have $w(\phi) = c_{g}(\mathcal{E}_{1}-\mathcal{E}_{0})$ by definition (the $2$-term complex represents the derived pushforward appearing in Equation~\eqref{w00}), and this Chern class equals the degeneracy class of $d$  (or rather its image in the Chow group of $\Jb_{g, n}(\phi)$) by \cite[Theorem~14.4(a)]{fulton}.
	
	Since the complex $d \colon \mathcal{E}_0 \to \mathcal{E}_1$ represents $\mathbb{R}\widetilde{q}_* (F_{\text{tau}})$, it  computes the cohomology of $F_{\text{tau}}$, and this property persists after making an arbitrary base change $T \to \Jb_{g, n}$ by a $\mathbb{C}$-morphism out of a $\mathbb{C}$-scheme $T$.  Taking $T \to \Jb_{g,n}(\phi)$ to be the inclusion of a closed point $(C, p_1, \ldots, p_n, F)$, we see that $h^{0}(C, F) \ne 0$ if and only the maximal minors of $d$ vanish.  In other words, the top degeneracy subscheme $D(\phi)$ of $d \colon \mathcal{E}_{0} \to \mathcal{E}_{1}$ has support equal to  $W(\phi)$.  Being the degeneracy Chow class,  $w(\phi)$ is supported on $D(\phi)$ by construction.
	
	To complete the proof, we assume $W(\phi)$ is irreducible and then prove $w(\phi) = [W(\phi)]$.  The closure of  $\{ (C, p_1, \dots, p_n, \calO_{C}) \colon C \text{ is smooth}\}$ is an irreducible component of $W(\phi)$, so by assumption, it must equal $W(\phi)$.  An elementary computation shows that this locus has the expected codimension of $g$, so we conclude by \cite[Theorem~14.4(c)]{fulton} that  $D(\phi)$ is Cohen--Macaulay with fundamental class equal to $w(\phi)$.  Furthermore, the fiber of $D(\phi)$ over a point of $\calM_{g, n} \subset \overline{\calM}_{g,n}$  is a single reduced point (by e.g.~\cite[Proposition~4.4]{arbarello} as the fiber is a Brill--Noether locus).  Taking the point to be the generic point, we conclude that $D(\phi)$ is generically reduced and hence, by the Cohen--Macaulay condition, reduced.  Since $D(\phi)$ and $W(\phi)$ have the same support, we must have $D(\phi) = W(\phi)$ and $ w(\phi)=[W(\phi)] $.
\end{proof}

\begin{rmk} For $\phi \in V_{g,n}^0$ the Brill--Noether locus $W(\phi)$ can fail to be irreducible. Arguing as in the proof of Lemma~\ref{expected}, the closure of $\{ (C, p_1, \dots, p_n, \calO_{C}) \colon C \text{ is smooth}\}$ is an irreducible component of $W(\phi)$ of the expected dimension. Let $\Delta_{i,S}$ denote the locus of curves
having a genus $i$ component with the marked points indexed by $S$. We claim that for each boundary divisor $\Delta_{i,S} \subset \overline{\calM}_{g,n}$, there exists a nondegenerate $\phi \in V_{g,n}^0$ such that $W(\phi)$ contains the preimage of $\Delta_{i,S}$ in $\Jb_{g,n}(\phi)$. Because this preimage has codimension $1$ and is supported on the boundary, we deduce that $W(\phi)$ fails to be irreducible for this $\phi$.

We now prove the claim. By applying \cite[Proposition~3.10]{kasspa2} we deduce that, for each boundary divisor $\Delta_{i,S} \subseteq \overline{\calM}_{g,n}$ and for each $t \in \mathbb{Z}$, there exists a nondegenerate $\phi$ such that, on a pointed curve $(C, p_1, \ldots, p_n)$ that represents a point in the interior of $\Delta_{i,S}$,  all line bundles of bidegree $(t, -t)$ are $\phi$-stable. Taking $t \geq i+1$, we argue that a bidegree $(t, -t)$ line bundle $L$ admits a nonzero global section as follows.  The restriction $L|_{C_1}$ admits a nonzero section vanishing at the node by the  Riemann--Roch formula (as $C_1$ the component of $C$ of genus $i$). Prolonging this section to zero on the component $C_2$ of $C$ genus $g-i$, we produce a nonzero global section of $L$ on $C$.
\end{rmk}

We continue with more preparatory lemmas. 
 
\begin{lm} \label{restriction} For $\phi \in V_{g,n}^0$ a nondegenerate small perturbation of $\underline{0} \in V_{g,n}^0$,  we have $W(\phi) \subseteq J_{g,n}^{\underline{0}}$.
\end{lm}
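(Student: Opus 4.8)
The plan is to take an arbitrary closed point $(C,p_1,\ldots,p_n,F)$ of $W(\phi)$ --- so $F$ is a $\phi$-stable rank~$1$ torsion-free sheaf of degree~$0$ admitting a nonzero global section $s$ --- and to show that $F$ is forced to be a line bundle of multidegree $\underline{0}$, hence lies in $\calJ_{g,n}^{\underline{0}}$. First I would decompose $C$ according to the section: let $C_1\subseteq C$ be the subcurve consisting of those irreducible components on which $s$ does \emph{not} vanish identically, and let $C_0=\overline{C_1^{\,c}}$ be the complementary subcurve, on which $s\equiv 0$. Since $s\neq 0$ we have $C_1\neq\emptyset$; the entire content of the lemma is to rule out the possibility that $C_0$ is a nonempty proper subcurve.

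So suppose, for contradiction, that $C_0$ is nonempty (and proper). The key geometric observation is that at each node $p\in C_1\cap\overline{C_1^{\,c}}$ where $F$ is locally free, the restriction $s|_{C_1}$ is forced to vanish: since $s$ vanishes on the $C_0$-branch and $F_p\cong\calO_{C,p}$, the germ $s_p$ lies in the ideal of that branch, hence also vanishes at $p$ along the $C_1$-branch. At the $\delta_{C_1}(F)$ nodes of $C_1\cap\overline{C_1^{\,c}}$ where $F$ fails to be locally free there is \emph{no} such constraint, because there $F$ is a pushforward from the normalization and the two branches decouple. Counting the forced zeros of $s|_{C_1}$ therefore yields the degree estimate
\[
\deg_{C_1}(F)\;\geq\;\#(C_1\cap\overline{C_1^{\,c}})-\delta_{C_1}(F).
\]
On the other hand, applying the $\phi$-stability inequality of Definition~\ref{semistab} to the subcurve $C_1$ and discarding all but the upper bound gives $\deg_{C_1}(F)<\sum_{C'\subseteq C_1}\phi_{C'}+\frac12\#(C_1\cap\overline{C_1^{\,c}})-\delta_{C_1}(F)$, while the explicit characterisation of small perturbations in Corollary~\ref{corsmallperturb} bounds $\sum_{C'\subseteq C_1}\phi_{C'}<\frac12\#(C_1\cap\overline{C_1^{\,c}})$. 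Combining these forces $\deg_{C_1}(F)<\#(C_1\cap\overline{C_1^{\,c}})-\delta_{C_1}(F)$, contradicting the estimate above. Note that the $\delta_{C_1}(F)$ terms cancel on the two sides, which is precisely what makes the argument insensitive to the failure of local freeness along $C_1\cap\overline{C_1^{\,c}}$.

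It follows that $C_0=\emptyset$, i.e.\ $s$ is generically nonzero on every component. Then $s\colon\calO_C\to F$ is injective with cokernel $Q$ supported in dimension~$0$, and since $\deg F=0$ gives $\chi(F)=\chi(\calO_C)$ we get $\chi(Q)=0$, so $Q=0$ and $F\cong\calO_C$. In particular $F$ is a line bundle of multidegree $\underline{0}$, so $(C,p_1,\ldots,p_n,F)\in\calJ_{g,n}^{\underline{0}}$, which proves the lemma. (As a byproduct this identifies $W(\phi)$ set-theoretically with the zero section $E$, which is presumably what the following step exploits.)

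The hard part will be the degree bookkeeping for non-locally-free sheaves. I would need to fix Kass--Pagani's precise convention for $\deg_{C_1}(F)$ on a subcurve --- whether the restriction is taken torsion-free and exactly how the torsion at non-locally-free nodes enters --- and then verify that, with that convention, both the section estimate and the stability inequality carry exactly the same $\delta_{C_1}(F)$ correction, so that the two terms cancel. The geometric input (forced vanishing of $s|_{C_1}$ only at the \emph{locally free} boundary nodes) is clear; aligning the $\delta$-corrections on the two sides of the contradiction is the one delicate point.
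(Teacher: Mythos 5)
Your proof is correct and follows essentially the same route as the paper's: decompose $C$ by the support of the global section, play the section's forced vanishing at the boundary nodes against the $\phi$-stability inequality combined with Corollary~\ref{corsmallperturb}, and derive a contradiction unless the section is supported on all of $C$. The only differences are refinements rather than a different argument: you track the $\delta(F)$ correction in the vanishing estimate explicitly (the paper states its inequality~\eqref{one} without it, which is harmless because the stability bound~\eqref{three} already carries the $-\delta_{C_0}(F)$ term), and you conclude $F\cong\calO_C$ directly by the Euler-characteristic argument, which the paper separates out as Lemma~\ref{trivial}.
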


This result was given by Dudin in \cite[Lemma~3.1]{dudin} in a slightly different formalism, with essentially the same proof. 

\begin{proof} Let $(C, p_1, \ldots, p_n, F)$ be in $W(\phi)$. Assume that the multidegree of $F$ is different from $\underline{0}$ and consider $s \in H^0(C, F)$. We aim to prove that $s=0$.

Because the total degree of $F$ is $0$ and the multidegree of $F$ is non-trivial, the section $s$ vanishes identically on some irreducible component of $C$. Let $C_0 \neq C$ be the (possibly empty) complement of the support of $s$.  Because the number of zeroes of a nonzero section is a lower bound on the degree of the corresponding sheaf, we deduce the inequality
\begin{equation} \label{one}
\deg_{C_0} F \geq \#C_0 \cap \overline{C_0^c}.
\end{equation}
On the other hand, if $C_0$ is nonempty,  Inequality~\eqref{eqnsemistab} for $F$ (the $\phi$-stability inequality for $F$) on $(C,p_1, \ldots, p_n)$ and $C_0 \subsetneq C$ reads
\begin{equation} \label{two}
\left|\deg_{C_0} F + \frac{\delta_{C_0}(F)}{2}-  \sum_{C' \subseteq C_0}  \phi(C, p_1, \ldots, p_n)_{C'}\right| < \frac{\# C_0 \cap \overline{C_0^c}- \delta_{C_0}(F)}{2}
\end{equation}

Combining \eqref{two} with Corollary~\ref{corsmallperturb} produces
\begin{equation} \label{three}
\deg_{C_0}  F  < \# C_0 \cap \overline{C_0^c} -  \delta_{C_0}(F).
\end{equation}

Since it is not possible for \eqref{one} and \eqref{three} to be simultaneously true (because $\delta_{C_0}(F)$ is a natural number), we deduce that $C_0 = \emptyset$ or, equivalently, that $s=0$ on $C$.
\end{proof}

The following is probably a well-known fact, but we provide a proof for the sake of completeness.
\begin{lm} \label{trivial} A line bundle $L$ of multidegree zero on a nodal curve $C$ has a nonzero global section if and only if $L$ is isomorphic to  $\mathcal{O}_C$.
\end{lm}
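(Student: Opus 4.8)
The plan is to prove both directions of the biconditional, the reverse direction being trivial since $\mathcal{O}_C$ has the constant section $1$. So I would focus entirely on the forward direction: assuming $L$ has multidegree zero and admits a nonzero global section $s \in H^0(C,L)$, I want to conclude $L \cong \mathcal{O}_C$. The natural strategy is to show that $s$ is nowhere vanishing, since a nowhere-vanishing global section of a line bundle trivializes it (giving an isomorphism $\mathcal{O}_C \to L$ sending $1 \mapsto s$).

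First I would analyze the vanishing locus of $s$ component by component. On each irreducible component $C'$ of $C$, the restriction $s|_{C'}$ is a section of $L|_{C'}$, a line bundle of degree zero on $C'$ by the multidegree hypothesis. A nonzero section of a degree-zero line bundle on an irreducible curve can have no zeroes (its divisor of zeroes would have positive degree, contradicting degree zero), so on each component $s|_{C'}$ is either identically zero or nowhere vanishing. Thus $C$ decomposes into the subcurve $C_0$ where $s$ vanishes identically and its complement where $s$ is nowhere zero. The heart of the argument is to rule out $C_0 \neq \emptyset$. This is essentially the same connectedness/degree bookkeeping that appears in the proof of Lemma~\ref{restriction}: if $C_0$ were a nonempty proper subcurve, then $s$ would vanish at every node in $C_0 \cap \overline{C_0^c}$ (by continuity, since $s$ is zero on $C_0$ and a node is a point of $C_0$), forcing the restriction of $s$ to the complementary subcurve $\overline{C_0^c}$ to be a nonzero section of a degree-zero line bundle vanishing at those nodes, hence of positive degree --- a contradiction unless there are no such nodes.

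The key step, which I expect to be the main obstacle, is handling the connectedness properly: on a general nodal curve $C$ one must verify that $C_0$ and its complement cannot be disjoint (i.e.\ that $C_0 \cap \overline{C_0^c}$ is nonempty whenever both are nonempty proper subcurves), which uses the connectedness of $C$. Since $C$ is connected and $C_0$ is a proper nonempty subcurve, the intersection $C_0 \cap \overline{C_0^c}$ must contain at least one node. At such a node $p$, the section $s$ vanishes (as $p \in C_0$), so viewing $s$ restricted to the component of $\overline{C_0^c}$ meeting $p$, we obtain a nonzero section of a degree-zero bundle with a zero at $p$, contradicting the degree count from the previous paragraph. Therefore $C_0 = \emptyset$, so $s$ is nowhere vanishing and $L \cong \mathcal{O}_C$.

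Alternatively, and perhaps more cleanly, one can phrase the whole argument via the inequalities already established: the subcurve $C_0$ on which $s$ vanishes satisfies $\deg_{C_0} L \geq \#(C_0 \cap \overline{C_0^c})$ exactly as in Inequality~\eqref{one} of Lemma~\ref{restriction}, while multidegree zero gives $\deg_{C_0} L = 0$; combining these forces $\#(C_0 \cap \overline{C_0^c}) = 0$, and connectedness of $C$ then forces $C_0 = \emptyset$. I would present this reduction to the degree inequality as the cleanest route, since it isolates the one genuinely geometric input (the zero-count bound on the degree) and otherwise reduces to elementary bookkeeping.
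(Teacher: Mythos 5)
Your proof is correct and follows essentially the same route as the paper's: there too, the multidegree-zero hypothesis forces $s$ to be nowhere vanishing on its support (packaged as the vanishing of the Euler characteristic of the cokernel of $\mathcal{O}_{C_0} \xrightarrow{\cdot s} L|_{C_0}$, rather than your component-by-component count of divisors of zeroes), and connectedness of $C$ then forces the support to be all of $C$, so that multiplication by $s$ trivializes $L$. One cosmetic remark: in your final paragraph the degree bound of Inequality~\eqref{one} properly applies to the support of $s$ (where $s$ restricts to a nonzero section vanishing at the connecting nodes), not to the vanishing subcurve $C_0$; since multidegree zero makes every subcurve have degree $0$, the conclusion $\#\left(C_0 \cap \overline{C_0^c}\right) = 0$ is unaffected.
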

\begin{proof} The interesting part is the only if. Let $s$ be a  nonzero global section and $C_0$ the (nonempty) support of $s$. Consider the short exact sequence
\begin{equation} \label{blah}
0 \rightarrow \mathcal{O}_{C_0} \xrightarrow{\cdot s|_{C_0}} L|_{C_0} \rightarrow \operatorname{Coker} \rightarrow 0.
\end{equation}
defining the sheaf $\operatorname{Coker}$. Taking Euler characteristics in \eqref{blah}, we deduce $\chi(\operatorname{Coker}) = 0$ and since $\operatorname{Coker}$ is supported on points, we deduce that $\operatorname{Coker}$ is trivial. Because $s$ vanishes identically on $C_0^c$, we deduce that $C_0 = C$ and that multiplication by $s$ gives an isomorphism $\mathcal{O}_{C} \to L$.
\end{proof}

The following is an immediate consequence of the three lemmas we have proved so far.
\begin{co} \label{coro} For $\phi \in V_{g,n}^0$ a nondegenerate small perturbation of $\underline{0}$, we have $w(\phi) = [E]$ for $E$ the image of the zero section in $\Jb_{g,n}(\phi)$.
\end{co}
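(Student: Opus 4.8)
The plan is to deduce Corollary~\ref{coro} by combining the three preceding lemmas to pin down both the underlying set of $W(\phi)$ and the scheme structure of the degeneracy class $w(\phi)$. First I would observe that, for $\phi$ a nondegenerate small perturbation of $\underline{0}$, Lemma~\ref{restriction} tells us that $W(\phi) \subseteq \calJ_{g,n}^{\underline{0}}$; that is, every point of $W(\phi)$ parameterizes a sheaf of trivial multidegree. Since $\phi$ is a small perturbation of $\underline{0}$, the inclusion $\calJ_{g,n}^{\underline{0}} \subseteq \Jb_{g,n}(\phi)$ holds, so each such sheaf is in fact a line bundle (the locally free locus) of multidegree $\underline{0}$. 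I would then invoke Lemma~\ref{trivial}: a multidegree-zero line bundle $L$ on a nodal curve admits a nonzero global section precisely when $L \cong \mathcal{O}_C$. Combining these, a point $(C, p_1, \ldots, p_n, F)$ lies in $W(\phi)$ if and only if $F \cong \mathcal{O}_C$, which is exactly the condition defining the image $E$ of the zero section. Hence $W(\phi)$ and $E$ have the same support, and in particular $W(\phi)$ is irreducible, being (the closure of) the image of the zero section over the irreducible base $\overline{\calM}_{g,n}$.

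With irreducibility of $W(\phi)$ established, the second half of Lemma~\ref{expected} applies directly and yields $w(\phi) = [W(\phi)]$ as Chow classes. Since we have just identified the support $W(\phi)$ with $E$, and since $E$ (the image of the zero section) is reduced and irreducible, the equality of cycles $[W(\phi)] = [E]$ follows. Chaining these equalities gives $w(\phi) = [W(\phi)] = [E]$, which is the assertion.

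I expect the only genuinely delicate point to be the verification that $W(\phi)$ is \emph{scheme-theoretically} irreducible with the zero section as its unique component, rather than merely set-theoretically equal to $E$ with possible embedded or extra components. The set-theoretic identification is immediate from Lemmas~\ref{restriction} and~\ref{trivial}, but to feed it into Lemma~\ref{expected} one needs that the closure of the smooth-curve locus $\{(C, p_1, \ldots, p_n, \mathcal{O}_C) : C \text{ smooth}\}$ really exhausts $W(\phi)$ as a reduced closed subscheme. This is precisely what the set-theoretic equality $W(\phi) = E$ guarantees, since that smooth locus is dense in $E$, so the irreducibility hypothesis of Lemma~\ref{expected} is satisfied and no separate argument about generic reducedness is needed beyond what Lemma~\ref{expected} already supplies. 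The remaining steps are then formal bookkeeping of the cycle-class identities.
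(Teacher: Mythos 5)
Your proposal is correct and follows exactly the paper's own argument: Lemmas~\ref{restriction} and~\ref{trivial} give $W(\phi) = E$, irreducibility of $E$ (as the closure of the zero section over the irreducible base) then lets the second half of Lemma~\ref{expected} produce $w(\phi) = [W(\phi)] = [E]$. Your closing remark is also right that no extra reducedness argument is needed, since Lemma~\ref{expected} itself handles the Cohen--Macaulay and generic-reducedness issues internally.
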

\begin{proof} By Lemmas~\ref{restriction} and~\ref{trivial} we deduce $W(\phi) = E$. Because $E$ is irreducible, the claim is obtained by applying Lemma~\ref{expected}.
\end{proof}

We set up some notation which we will need in the proof of Theorem~\ref{main}. Recall from Section~\ref{Section: HolmesWork} that the Abel--Jacobi section $\sigma$ is only a rational map, and $\calM_{g,n}^{\lozenge} \to \overline{\calM}_{g,n}$ is defined so that the pullback of $\sigma$ to $\calM_{g,n}^{\lozenge}$ extends.  These morphisms fit into the following pullback square defining $\calJ^{\lozenge}_{g,n}$:
\begin{equation}
\xymatrix{\calJ^{\lozenge}_{g,n} \ar^{\widetilde{\pi}^{\lozenge}}[r] \ar_{p^{\lozenge}}[d] & \calJ^{\underline{0}}_{g,n} \ar_{p}[d]  \\
\calM_{g,n}^{\lozenge} \ar^{\pi^{\lozenge}}[r] \ar@/_1pc/[u]_{e^{\lozenge}} \ar@/^2pc/[u]^{\sigma^{\lozenge}} & \overline{\calM}_{g,n}. \ar@/^1pc/@{.>}[u]^{\sigma} \ar@/_1pc/[u]_{e}}
\end{equation}
Denote by $E$ the scheme-theoretic image of $e$ and similarly with  $E^{\lozenge}$ and $\Sigma^{\lozenge}$.   Denote by $\Sigma$ the Zariski closure of the scheme-theoretic image of $\sigma$ (on the largest open substack of $\overline{\calM}_{g,n}$ where it extends to a well-defined morphism).

\begin{lm} \label{normal} The restriction of $\widetilde{\pi}^{\lozenge}$ to $\Sigma^{\lozenge}$ is the normalization $\Sigma^{\lozenge} \to \Sigma$.
\end{lm}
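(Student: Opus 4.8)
The plan is to exploit the two universal properties in play---that of $\calM^{\lozenge}_{g,n}$ as the terminal $\sigma$-extending stack (Definition~\ref{sigmaext}) and that of normalization---in order to identify $\widetilde{\pi}^{\lozenge}|_{\Sigma^{\lozenge}}$ with the normalization map, thereby obtaining the (a priori unclear) finiteness essentially for free. First I would observe that $\sigma^{\lozenge}$ is a section of the separated morphism $p^{\lozenge}$ and hence a closed immersion, so that $\Sigma^{\lozenge}$ is isomorphic to $\calM^{\lozenge}_{g,n}$ and is in particular normal and integral. Under this identification the map in question, $f := \widetilde{\pi}^{\lozenge}|_{\Sigma^{\lozenge}}$, becomes the extended Abel--Jacobi morphism $s := \widetilde{\pi}^{\lozenge}\circ\sigma^{\lozenge}\colon \calM^{\lozenge}_{g,n}\to \calJ^{\underline{0}}_{g,n}$, whose image is $\Sigma$ by definition. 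Since $\pi^{\lozenge}$ is an isomorphism over $\calM_{g,n}$ and $\sigma$ is a genuine morphism there, $f$ is birational onto $\Sigma$. At this point it suffices to prove that $f$ is finite, since a finite birational morphism from a normal integral stack onto an integral stack is the normalization.

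To produce the finiteness I would realise the normalization $\nu\colon \Sigma^{\nu}\to\Sigma$ as an object of the category of $\sigma$-extending morphisms. Indeed $\Sigma^{\nu}$ is normal, the composite $\Sigma^{\nu}\to\Sigma\to\overline{\calM}_{g,n}$ is birational so that the preimage of $\calM_{g,n}$ is dense, and the rational section $\sigma$ extends over $\Sigma^{\nu}$ because the tautological inclusion $\Sigma\hookrightarrow\calJ^{\underline{0}}_{g,n}$ (which restricts to $\sigma$ over $\calM_{g,n}$) pulls back to an honest morphism $\Sigma^{\nu}\to\calJ^{\underline{0}}_{g,n}$. Thus $\Sigma^{\nu}\to\overline{\calM}_{g,n}$ is $\sigma$-extending, and by the terminal property of $\calM^{\lozenge}_{g,n}$ there is a unique $\overline{\calM}_{g,n}$-morphism $h\colon \Sigma^{\nu}\to\calM^{\lozenge}_{g,n}$. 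Because the extension of $\sigma$ on each side is unique, $h$ automatically commutes with the two extended sections, which gives the crucial relation $f\circ h=\nu$.

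I would then finish by a cancellation argument. As $\nu$ is finite it is proper and quasi-finite; since $f$ is separated, properness of $f\circ h=\nu$ forces $h$ proper, and quasi-finiteness of $\nu$ forces $h$ quasi-finite, so $h$ is finite. Being in addition birational onto the normal stack $\calM^{\lozenge}_{g,n}$, the finite map $h$ must be an isomorphism, and therefore $f=\nu\circ h^{-1}$ is the normalization of $\Sigma$, as claimed. I expect the main obstacle to be the verification that $\Sigma^{\nu}\to\overline{\calM}_{g,n}$ is genuinely $\sigma$-extending---in particular that $\sigma$ extends to a \emph{morphism} (rather than merely a rational map) on $\Sigma^{\nu}$, and that the density hypothesis holds---since everything downstream hinges on being allowed to invoke the terminal property of $\calM^{\lozenge}_{g,n}$; the finiteness that one might fear is problematic, given that $\pi^{\lozenge}$ itself is not proper, is then delivered automatically by the finiteness of $\nu$. (Alternatively, one could avoid the cancellation step by factoring $f$ through $\Sigma^{\nu}$ via the universal property of normalization to get $g\colon\calM^{\lozenge}_{g,n}\to\Sigma^{\nu}$, noting $h\circ g=\mathrm{id}$ by terminality, and concluding that the section $g$ is a closed immersion with dense image into the reduced stack $\Sigma^{\nu}$, hence an isomorphism.)
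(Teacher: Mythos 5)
Your proposal is correct, and its engine is the same as the paper's: both arguments turn on the observation that the normalization $\Sigma^{\nu} \to \Sigma \to \overline{\calM}_{g,n}$ is itself $\sigma$-extending (normal, with dense preimage of $\calM_{g,n}$, and with $\sigma$ extending via the inclusion $\Sigma \hookrightarrow \calJ^{\underline{0}}_{g,n}$), so that terminality of $\calM^{\lozenge}_{g,n}$ produces a map $h\colon \Sigma^{\nu} \to \calM^{\lozenge}_{g,n}$ compatible with the extended sections. Where you diverge is the finishing move. The paper also constructs the map in the opposite direction, $\Sigma^{\lozenge} \to \Sigma^{\nu}$, via the universal property of normalization (using normality of $\calM^{\lozenge}_{g,n} \cong \Sigma^{\lozenge}$), and then checks the two maps are mutual inverses because they agree with the identity over the schematically dense open $\calM_{g,n}$ and everything is separated over $\overline{\calM}_{g,n}$ --- no finiteness or Zariski--Main--Theorem input is needed. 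Your main route instead extracts finiteness of $h$ by cancellation against the finite map $\nu = f \circ h$ (properness plus quasi-finiteness) and then invokes ``finite and birational onto a normal target implies isomorphism''; this is valid, modulo the standard representability checks for stacks that the paper also suppresses, but it is heavier machinery than the paper needs. Your parenthetical alternative --- build $g$ via the universal property of normalization, get $h \circ g = \mathrm{id}$ from terminality, and conclude --- is in substance exactly the paper's proof. One small imprecision to fix: the image of $\widetilde{\pi}^{\lozenge} \circ \sigma^{\lozenge}$ is not $\Sigma$ ``by definition'' ($\Sigma$ is defined as the closure of the image of the rational section on $\overline{\calM}_{g,n}$); that this morphism even factors through $\Sigma$ requires the one-line argument that $\calM_{g,n}$ is schematically dense in the reduced stack $\Sigma^{\lozenge}$, which is precisely how the paper opens its proof.
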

\begin{proof} Let $\widetilde{\Sigma}$ be the normalization of ${\Sigma}$. Note that $\pi^\lozenge$ is an isomorphism over $\calM_{g,n}$, so $\sigma^{\lozenge}$ and $\sigma \circ \pi^{\lozenge}$ coincide there. Since $\calM_{g,n}$ is schematically dense in $\calM_{g,n}^{\lozenge}$ we see that the map $\Sigma^\lozenge \to \calJ^{\underline{0}}_{g,n}$ factors through $\Sigma$. Hence by the normality of $\calM_{g,n}^{\lozenge}$ and the universal property of the normalization we get a map $\Sigma^\lozenge \to \widetilde{\Sigma}$.

Conversely, the projection map $t\colon \widetilde \Sigma \to \overline{\calM}_{g,n}$ is \emph{$\sigma$-extending} as in Definition~\ref{sigmaext}; in other words, $\widetilde \Sigma$ is normal and the rational map $\sigma\colon \widetilde \Sigma \dashedrightarrow \calJ^{\underline{0}}_{g,n}$ evidently extends to a morphism. By the universal property of $\calM_{g,n}^{\lozenge}$ we obtain a map $\widetilde \Sigma \to \calM_{g,n}^{\lozenge}$, and this map factors via the closed immersion $\Sigma^\lozenge \to \calM_{g,n}^{\lozenge}$ because $\calM_{g,n}$ is schematically dense in $\widetilde \Sigma$ and the spaces coincide over $\calM_{g,n}$.

We thus have maps $\Sigma^\lozenge \to \widetilde{\Sigma}$ and $\widetilde \Sigma \to \Sigma^\lozenge$. Moreover, both spaces are separated over $\overline{\calM}_{g,n}$, and the maps are mutual inverses over the schematically dense open $\calM_{g,n}$, hence they are mutual inverses everywhere.
\end{proof}

\begin{lm} \label{schematic} The schematic intersection of the sections $\Sigma^{\lozenge}$ and $E^{\lozenge}$ in $\calJ^{\lozenge}_{g,n}$ is proper over $\overline{\calM}_{g,n}$.\end{lm}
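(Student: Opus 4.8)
The plan is to reduce the statement to the properness result of Holmes recalled in Section~\ref{Section: HolmesWork}, namely that the closed subscheme $\sigma^{\lozenge -1}(E^{\lozenge})$ of $\calM^{\lozenge}_{g,n}$ is proper over $\overline{\calM}_{g,n}$ (here $E^{\lozenge}$ is the schematic image of $e^{\lozenge}$, which is the subscheme called $E$ in \cite[Section~5]{holmes}). The key observation is that $\sigma^{\lozenge}$ is a section of $p^{\lozenge}$ and hence a closed immersion, so it identifies $\calM^{\lozenge}_{g,n}$ isomorphically with the closed substack $\Sigma^{\lozenge} \subseteq \calJ^{\lozenge}_{g,n}$. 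Under this isomorphism the schematic intersection $\Sigma^{\lozenge} \cap E^{\lozenge}$ should correspond precisely to $\sigma^{\lozenge -1}(E^{\lozenge})$.

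First I would make this identification precise. For a closed immersion $i \colon Z \to X$ with image $W = i(Z)$ and a closed subscheme $Y \subseteq X$, the preimage $i^{-1}(Y)$ is carried isomorphically by $i$ onto the schematic intersection $W \cap Y$; concretely, both are canonically identified with the fibre product $Z \times_X Y$. Applying this with $i = \sigma^{\lozenge}$, $W = \Sigma^{\lozenge}$ and $Y = E^{\lozenge}$ yields an isomorphism $\sigma^{\lozenge} \colon \sigma^{\lozenge -1}(E^{\lozenge}) \xrightarrow{\sim} \Sigma^{\lozenge} \cap E^{\lozenge}$ of closed substacks of $\calJ^{\lozenge}_{g,n}$.

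Next I would check that this isomorphism is compatible with the two structure morphisms down to $\overline{\calM}_{g,n}$. Since $\sigma^{\lozenge}$ is a section of $p^{\lozenge}$, we have $p^{\lozenge} \circ \sigma^{\lozenge} = \mathrm{id}_{\calM^{\lozenge}_{g,n}}$, so the composite $\Sigma^{\lozenge} \cap E^{\lozenge} \hookrightarrow \calJ^{\lozenge}_{g,n} \xrightarrow{p^{\lozenge}} \calM^{\lozenge}_{g,n} \xrightarrow{\pi^{\lozenge}} \overline{\calM}_{g,n}$ agrees, under the above isomorphism, with the restriction of $\pi^{\lozenge}$ to $\sigma^{\lozenge -1}(E^{\lozenge})$. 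Therefore $\Sigma^{\lozenge} \cap E^{\lozenge}$ is proper over $\overline{\calM}_{g,n}$ if and only if $\sigma^{\lozenge -1}(E^{\lozenge})$ is, and the latter holds by \cite[Section~5]{holmes}.

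I do not expect a serious obstacle here: the substantive input is already contained in Holmes's properness theorem, and the remaining work is the bookkeeping of identifying the schematic intersection with a pullback along the section and matching the two maps to $\overline{\calM}_{g,n}$. The only point demanding care is to confirm that the zero section $E$ appearing in \cite[Section~5]{holmes} is the same closed substack as $E^{\lozenge}$ in the present notation, which is immediate from the definition of $E^{\lozenge}$ as the schematic image of $e^{\lozenge}$.
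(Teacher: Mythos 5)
Your proof is correct, and it takes a genuinely different route from the paper's. You identify the schematic intersection $\Sigma^{\lozenge} \cap E^{\lozenge}$ with $\sigma^{\lozenge -1}(E^{\lozenge})$, using that the section $\sigma^{\lozenge}$ is a closed immersion (the paper already records it as a regular embedding), both sides being the fibre product $\calM_{g,n}^{\lozenge} \times_{\calJ_{g,n}^{\lozenge}} E^{\lozenge}$; you then check compatibility of the structure maps to $\overline{\calM}_{g,n}$ and conclude by Holmes' theorem that $\sigma^{\lozenge -1}(E^{\lozenge})$ is proper over $\overline{\calM}_{g,n}$ --- the result recalled in Section~\ref{Section: HolmesWork}, which the paper in any case needs for the pushforward in Equation~\eqref{deflozenge} to make sense. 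The paper instead argues via Lemma~\ref{normal}: the restriction of $\widetilde{\pi}^{\lozenge}$ identifies $\Sigma^{\lozenge} \cap E^{\lozenge}$ with the fibre product over $\calJ^{\underline{0}}_{g,n}$ of $E$ with the normalization $\widetilde{\Sigma}$ of $\Sigma$, and properness then follows because the normalization map is finite and $E$ is proper over $\overline{\calM}_{g,n}$ (being isomorphic to it via the zero section $e$). Your route is shorter and bypasses Lemma~\ref{normal} entirely; the paper's route keeps the input from \cite{holmes} confined to the definition of $[DR^{\lozenge}]$ and derives the properness here from the elementary fact that the zero section of $\calJ^{\underline{0}}_{g,n}$ is proper over the base --- so that, combined with your identification, the paper's argument in effect reproves the properness statement it quotes from \cite{holmes}. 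Both routes rest on the notational point you rightly flag: the $E$ of Section~\ref{Section: HolmesWork} is the $E^{\lozenge}$ of Section~\ref{mainresult}.
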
 \begin{proof} As a consequence of Lemma~\ref{normal}, the restriction of $\widetilde{\pi}^{\lozenge}$ induces an isomorphism from  the scheme-theoretic intersection of the sections $\Sigma^{\lozenge}$ and $E^{\lozenge}$ in $\calJ^{\lozenge}_{g,n}$ to the fiber product over $\calJ^{\underline{0}}_{g,n}$ of $E$ and of the normalization of $\Sigma$. The claim follows from the fact that $E$ is proper over $\overline{\calM}_{g,n}$.\end{proof}

We are now ready for the proof of the main result.

\begin{proof} (of Theorem~\ref{main})
To prove the theorem we pushforward the Chow class $[\Sigma^{\lozenge}] \cdot [E^{\lozenge}]$ along morphisms that are, in general, not proper. However, this class is supported on a proper subvariety (as shown in Lemma~\ref{schematic}), so this can be justified by choosing compatible compactifications of the various spaces involved, possibly after blowing up the boundaries to avoid extra intersections (apply \cite[Exercise II.7.12]{Hartshorne2013Algebraic-geome}) and then observing that the resulting cycles are independent of the chosen compactifications.

The push--pull formula applied to $\widetilde{\pi}^{\lozenge}$, together with the fact that $[E^{\lozenge}]~=~\widetilde{\pi}^{\lozenge *}[E]$ and Lemma~\ref{normal}, produces the equality of classes
\begin{equation} \label{pushpull}
\widetilde{\pi}^{\lozenge}_* (  [\Sigma^{\lozenge}] \cdot [E^{\lozenge}]) =  [{\Sigma}] \cdot [E].
\end{equation}

Taking the pushforward along $p$ of the left hand side of \eqref{pushpull} we obtain
\begin{equation} \label{lhs}
p_* \circ \widetilde{\pi}^{\lozenge}_* ( [\Sigma^{\lozenge}] \cdot [E^{\lozenge}]) = \pi^{\lozenge}_* \left( p^{\lozenge}_* \left( [\Sigma^{\lozenge}] \cdot  [E^{\lozenge}] \right) \right)= \pi^{\lozenge}_*( {\sigma}^{\lozenge *}  [E^{\lozenge}] )= [DR^\lozenge].
\end{equation}
The first equality is functoriality of the pushforward, the second equality is the push--pull formula for the section $\sigma^{\lozenge}$ and the last equality is Formula~\eqref{deflozenge}.

Taking the pushforward along $p$ of the right hand side of \eqref{pushpull} we obtain
\begin{equation} \label{rhs}
p_*([{\Sigma}] \cdot [E])= \overline{p}_*([\overline{\Sigma}(\phi)] \cdot [{E}])=\overline{p}_*( [\overline{\Sigma}(\phi)]\cdot w(\phi) )= [DR(\phi)]
\end{equation}
where $\phi$ is a nondegenerate small perturbation of $\underline{0}$, $\overline{p} \colon \overline{\calJ}_{g,n}(\phi) \to \overline{\calM}_{g,n}$ is the forgetful morphism and $\overline{\Sigma}(\phi)$ is the closure in $\Jb_{g,n}(\phi)$ of $\Sigma\subset \calJ^{\underline{0}}_{g,n}$. The first equality follows from the fact that $E$ is closed in $\overline{\calJ}_{g,n}(\phi)$. The second equality is Corollary~\ref{coro}.  The last equality is the definition of $[DR(\phi)]$, see Formula~\eqref{fancypullback}.

By combining Equations~\eqref{pushpull}, \eqref{lhs} and \eqref{rhs} we conclude
\[
[DR(\phi)]= [DR^\lozenge].
\]
\end{proof}

\begin{rmk} \label{analogous}
As an interesting by-product of the proof of Theorem~\ref{main} we also obtain a simple description of $[DR^\lozenge]$ (and hence, when $k=0$, of the Li--Graber--Vakil extension of the double ramification cycle) as
\begin{equation} \label{classes}
[DR^\lozenge]= p_* ([\Sigma] \cdot [E])= e^*([\Sigma])
\end{equation}
for $p \colon \calJ^{\underline{0}}_{g,n} \to \overline{\calM}_{g,n}$ the natural forgetful morphism. By the definition of pullback along the rational map $\sigma$ (see Equation~\eqref{fancypullback}), the common class in~\eqref{classes} can also be described as $\sigma^*([E])$. 
\end{rmk}

\section{Consequences}\label{consequences}

In \cite[Section~6.1]{kasspa2} the authors characterized the set of nondegenerate $\phi \in V_{g,n}^0$ with the property that the universal line bundle $\omega_C^{-\otimes k}(a_1 p_1 + \ldots + a_n p_n)$ is $\phi$-stable. For such $\phi$'s, Formula~\eqref{fancypullback} reduces to the usual pullback $\sigma^*(w(\phi))$  by the lci morphism $\sigma$  and the corresponding extension of the double ramification cycle is computed as 
\begin{equation} \label{formulaclosed}
 [DR(\phi)] = c_g \left( - \mathbb{R}q_* \left(\omega_C^{-\otimes k}(a_1 p_1 + \ldots + a_n p_n) \right) \right)
 \end{equation}
 The computation is derived by using the  definition of $w(\phi)$ in Formula~\eqref{w00}, invoking  cohomology and base change, and then applying the Grothendieck--Riemann--Roch formula to the universal curve  $q \colon \overline{\calC}_{g,n}\to\overline{\calM}_{g,n}$ as in \cite[Part~II]{mumford}.

All other classes $[DR(\phi)]$ can in principle be computed by applying wall-crossing formulae (as carried out in \cite[Theorem~4.1]{kasspa1} by Kass--Pagani in the similar but simpler case of the theta divisor, the Brill--Noether class $w^r_d$ with $r=0$ and $d=g-1$). As we mentioned in the introduction, this gives a new approach to computing the class of the double ramification cycle --- either $[DR_{LGV}]$ when $k=0$, or for general $k$ the cycle $[DR^\lozenge]$, which conjecturally agrees with Pixton's formula, see \cite[Conjecture~1.4]{holmes}.

A natural question at this point is whether it is possible for some universal line bundle $\omega_C^{-\otimes k}(a_1 p_1 + \ldots + a_n p_n)$ to be $\phi$-stable \emph{for some nondegenerate small perturbation of $\underline{0}$}. This happens only when the vector $(k; a_1, \ldots, a_n)$ is \emph{trivial}, \emph{i.e.}~when $k(2-2g)=a_1= \dots=a_n=0$. Indeed, if $\phi$ is nondegenerate, then on curves with $1$ separating node there is a unique $\phi$-stable bidegree of line bundles by Definition~\ref{semistab}. If $\phi$ is a small perturbation of $\underline{0}$, this bidegree must be $(0,0)$. Therefore to be $\phi$-stable, the universal line bundle $\omega_C^{-\otimes k}(a_1 p_1 + \ldots + a_n p_n)$ must have trivial bidegree on all curves with $1$ separating node, which implies that it is trivial.

A better question is to ask if it is possible  that, for some \emph{nontrivial} vector $(k; a_1, \ldots, a_n)$, the corresponding Abel--Jacobi section $\sigma= \sigma_{k; a_1, \ldots, a_n}$ extends to a well-defined morphism $\overline{\calM}_{g,n} \to \Jb_{g,n}(\phi)$ for some nondegenerate small perturbation $\phi$ of $\underline{0}$. This happens only for the vectors $(k; a_1, \ldots, a_n)$ that are very close to $\underline{0}$, in a sense that we make precise in the following proposition.

\begin{pr} \label{cor} Let $g, n\geq 1$ and assume $(k; a_1, \ldots, a_n)$ is not trivial. The corresponding Abel--Jacobi section $\sigma$ extends to a well-defined morphism $\overline{\calM}_{g,n} \to \Jb_{g,n}(\phi)$ for some  nondegenerate small perturbation $\phi$ of $\underline{0}$ if and only if $k(2-2g)=0$ and $\underline{a}= (0, \ldots, \pm 1, \ldots, \mp 1, \ldots, 0)$.
\end{pr}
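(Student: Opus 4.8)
The plan is to reduce the statement to a question about the extendability of the Abel--Jacobi section $\sigma$ across the boundary divisors of $\overline{\calM}_{g,n}$, and then to analyze this extension divisor-by-divisor using the explicit $\phi$-stability inequalities from Definition~\ref{semistab} together with the characterization of small perturbations in Corollary~\ref{corsmallperturb}. First I would observe that $\sigma$ extends to a morphism $\overline{\calM}_{g,n} \to \Jb_{g,n}(\phi)$ if and only if, for the universal line bundle $L = \omega_C^{-\otimes k}(a_1 p_1 + \ldots + a_n p_n)$, its limits across each boundary stratum land in the $\phi$-stable locus. Since $\sigma$ is already defined over $\calM_{g,n}$ and $\pi^{\lozenge}$ is an isomorphism over the compact-type locus, the obstruction to extending $\sigma$ is concentrated precisely at the (non-separating) nodal degenerations, where the multidegree of the limiting line bundle is genuinely ambiguous. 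The condition ``$\sigma$ extends to a morphism'' is therefore equivalent to requiring that on each one-parameter smoothing of a nodal curve, the $\phi$-stable representative of the limiting multidegree class is attained by a \emph{line bundle} (rather than a non-locally-free sheaf) and is independent of the approach direction.

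Next I would prove the easier ``if'' direction: when $k(2-2g)=0$ and $\underline{a}=(0,\ldots,\pm 1,\ldots,\mp 1,\ldots,0)$, the line bundle $L$ reduces to $\mathcal{O}_C(p_i - p_j)$ for a single pair of markings. On any nodal degeneration, the multidegree of this bundle differs from $\underline{0}$ by at most $1$ on each component, and one checks directly from Inequality~\eqref{smallperturb} that such a bundle of near-trivial multidegree remains $\phi$-stable for a small perturbation $\phi$ of $\underline{0}$; crucially, because the total ``charge'' moved is only a single $+1/-1$ pair supported at the marked points, the $\phi$-stable limit is a genuine line bundle on every stratum, so no Abel--Jacobi indeterminacy arises. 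For the harder ``only if'' direction, I would argue contrapositively: given any nontrivial vector not of this form, I would exhibit an explicit boundary stratum --- a curve with a single non-separating node, or with a component carrying ``too much'' of the twist $\omega^{-\otimes k}$ or $\sum a_i p_i$ --- on which the two one-sided limits of $\sigma$ have distinct $\phi$-stable multidegrees, forcing $\sigma$ to be genuinely discontinuous there. The point is that once $|a_i| \geq 2$ for some $i$, or two distinct $a_i$ are nonzero with the wrong signs, or $k(2-2g) \neq 0$, the relevant sum $\sum_{C' \subseteq C_0} \deg_{C'} L$ across a suitable separating or non-separating node exceeds the allowable band $\tfrac{1}{2}\#(C_0 \cap \overline{C_0^c})$ dictated by~\eqref{smallperturb}, so no small perturbation can make the limit both stable and approach-independent.

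The main obstacle I anticipate is the bookkeeping in the ``only if'' direction: one must show that for \emph{every} nontrivial vector outside the prescribed family there exists \emph{some} destabilizing stratum, and this requires a careful case analysis organized by how the total degree $k(2-2g)+\sum a_i = 0$ is distributed. I expect the cleanest approach is to test against curves with exactly one separating node first (this immediately forces $k(2-2g)=0$ and pins down the distribution of the $a_i$ across the two sides, exactly as in the trivial-vector argument already given in the text preceding the proposition), and then against curves with one non-separating node to control the individual sizes $|a_i|$ and rule out configurations with more than one nonzero $a_i$ of the same effective sign. The delicate step is ensuring that a single well-chosen degeneration simultaneously detects both the magnitude constraint ($|a_i| \leq 1$) and the support constraint (at most two nonzero entries, summing to zero); I would handle this by separately smoothing along a node that isolates the $i$-th marked point on a rational tail, where the stability band~\eqref{smallperturb} becomes tightest and the computation is most transparent.
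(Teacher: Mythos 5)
Your overall reduction---that extension of $\sigma$ is obstructed only at non-compact-type degenerations and should be tested stratum by stratum---is the right spirit, but the test strata you choose in the ``only if'' direction are exactly the ones that carry \emph{no} obstruction, so that half of the argument would fail. A curve with one separating node is of compact type, and over the compact-type locus the section extends for any vector and any $\phi$ (indeed $\pi^{\lozenge}$ is an isomorphism there), so no condition on $(k;a_1,\ldots,a_n)$ can be extracted from such curves; the separating-node argument in the text preceding Proposition~\ref{cor} addresses a genuinely stronger condition ($\phi$-stability of the universal line bundle itself), not extendability of $\sigma$, and you have conflated the two. Likewise a curve with a single non-separating node is irreducible, has no proper subcurves, and hence imposes no stability constraint at all. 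The actual obstruction lives in codimension two: the paper invokes \cite[Corollary~6.5]{kasspa2}, which says that $\sigma$ extends if and only if $\omega_C^{-\otimes k}(a_1p_1+\cdots+a_np_n)$ is $\phi$-stable on every curve consisting of two smooth components meeting in at least two nodes. On such two-node ``vine'' curves, for a nondegenerate small perturbation of $\underline{0}$ the $\phi$-stable line-bundle bidegrees $(t,-t)$ have $t\in\{-1,0,1\}$; the paper then forces $k=0$ using the vine curve with an unmarked genus-$1$ component (where the bidegree is $(-2k,2k)$, which is even) and constrains $\underline{a}$ using vine curves with a rational component carrying $p_i,p_j$. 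Without this criterion, or a proved substitute for it, your divisor-by-divisor analysis never sees the relevant strata.

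There is a second gap in your ``if'' direction: you claim that Inequality~\eqref{smallperturb} directly shows $\mathcal{O}_C(p_i-p_j)$ is $\phi$-stable for a small perturbation, but \eqref{smallperturb} only characterizes stability of the \emph{trivial} bundle, and the claim is false for an arbitrary small perturbation. On a two-node vine curve with $p_i$ and $p_j$ on opposite components, $\mathcal{O}_C(p_i-p_j)$ has bidegree $(1,-1)$, and Definition~\ref{semistab} then requires $0<\phi_{C_1}<1$; a small perturbation with $\phi_{C_1}\in(-1,0)$ destabilizes it. So the real burden of the ``if'' direction is to construct a single nondegenerate $\phi$, compatible with all degenerations, that works on every vine curve simultaneously. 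This is precisely what the paper does by prescribing $\phi(\Gamma_i)=\left(\tfrac{1}{2}+\epsilon_i,-\tfrac{1}{2}-\epsilon_i\right)$, $\phi(\Gamma_j)=\left(-\tfrac{1}{2}-\epsilon_j,\tfrac{1}{2}+\epsilon_j\right)$, $\phi(\Gamma_{k\neq i,j})=(\epsilon_k,-\epsilon_k)$ and verifying stability via \cite[Formula~(29)]{kasspa2}; your proposal leaves both the construction and the verification unaddressed, and these are the substance of the proof.
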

\begin{proof}

For simplicity we only discuss the case $g \geq 2$ (the case $g=1$ is similar and simpler).

To prove our claim we invoke \cite[Corollary~6.5]{kasspa2}, which implies that $\sigma= \sigma_{k; a_1, \ldots, a_n}$ extends to a well-defined morphism $\overline{\calM}_{g,n}\to \Jb_{g,n}(\phi)$ if and only if the universal line bundle $\omega_C^{-\otimes k}(a_1 p_1 + \ldots + a_n p_n)$ is $\phi$-stable on all stable pointed curves $(C, p_1, \ldots, p_n)$ that consist of $2$ smooth irreducible components meeting in at least $2$ nodes.

Assume $k=0$ and $\underline{a}= (0, \ldots, a_i= 1, \ldots, a_j=- 1, \ldots, 0)$ and define $\phi \in V_{g,n}^0$ using \cite[Isomorphism~(11)]{kasspa2} to be the unique stability parameter that is trivial over all stable curves with $1$ separating node (in the notation of \cite{kasspa2}, its projection to $C_{g,n}$ is trivial) and such that \[\phi(\Gamma_i) = \left( \frac{1}{2} + \epsilon_i, -\frac{1}{2} - \epsilon_i\right), \quad \phi(\Gamma_j) = \left( -\frac{1}{2}- \epsilon_j, \frac{1}{2}+\epsilon_j\right), \quad \phi(\Gamma_{k \neq i, j}) = \left(\epsilon_k, - \epsilon_k\right)\] for some perturbation $0 < || (\epsilon_1, \ldots, \epsilon_n)||<<1$ making the parameter $\phi$ nondegenerate. (Here $\Gamma_t$ for $t=1, \ldots, n$ is any curve with a smooth component of genus $0$ carrying the marking $p_t$, connected by $2$ nodes to a smooth component of genus $g-1$ with all other markings).  To check that $\phi$ is a small perturbation of $\underline{0}$, by \cite[Corollary~5.9]{kasspa2} it is enough to show that the trivial line bundle is $\phi$-stable over all curves with $2$ smooth irreducible components, which is achieved by applying \cite[Formula~(29)]{kasspa2}. To conclude we prove that $\mathcal{O}_C(a_1 p_1 + \ldots + a_n p_n)$ is $\phi$-stable on every stable pointed curve $(C, p_1, \ldots, p_n)$ that consists of $2$ smooth irreducible components and at least $2$ nodes.  By applying \cite[Formula~(29)]{kasspa2} we deduce the inequality
\begin{equation}
  \left|\sum_{i : p_i \in C'} a_i   - \phi(C,p_1, \ldots, p_n)_{C'} \right| < \frac{  \#\text{Sing} (C)}{2},
\end{equation}
where  $C'$ denotes either of the components of $C$. By Definition~\ref{semistab} we have that  $\mathcal{O}_C(a_1 p_1 + \ldots + a_n p_n)$ is $\phi$-stable on $(C,p_1, \ldots, p_n)$ and we conclude that $\sigma$ extends to a well-defined morphism on $\overline{\calM}_{g,n}$.

For the other implication we use the following criterion. By Definition~\ref{semistab}, if $(C, p_1, \ldots, p_n)$ is a stable curve that consists of $2$ smooth irreducible components meeting in $2$ nodes and $\phi$ is a nondegenerate small perturbation of $\underline{0}$, then a line bundle of bidegree $(t,-t)$ is $\phi$-stable if and only if $t=\pm 1$ (because the $\phi$-stable bidegrees are $2$  consecutive bidegrees and one of them is $(0,0)$).

The universal line bundle $\omega^{-k}(a_1 p_1 + \ldots + a_n p_n)$ has bidegree $(-2k,2k)$ on  the stable curve that consists of a smooth component of genus $1$ without markings connected by $2$ nodes to a smooth component of genus $g-2$ with all markings and by the criterion we explained above the universal line bundle cannot be $\phi$-stable unless $k=0$. Assuming now $k=0$, if $\underline{0} \neq \underline{a}\neq (0, \ldots, \pm 1, \ldots, \mp 1, \ldots, 0)$ there are $1 \leq i \leq j \leq n$ such that $a_i + a_j=t \geq 2$. The universal line bundle has bidegree $(t,-t)$ on the stable curve that consists of a smooth component of genus $0$ with the markings $p_i$ and $p_j$, connected by $2$ nodes to a smooth component of genus $g-1$ with all other markings. By applying the criterion again, the universal line bundle is not $\phi$-stable and the proof is concluded.
\end{proof}

The proposition makes it possible,  when $(k; a_1, \ldots, a_n)$ is nontrivial and very close to~$\underline{0}$, to describe the class $[DR_{LGV}]$ as a degree-$g$ Chern class similar to Formula~\eqref{formulaclosed}. By \cite[Proposition~6.4]{kasspa2} the map $\sigma$ extends to a well-defined morphism $\overline{\calM}_{g,n} \to \Jb_{g,n}(\phi)$ if and only if the universal line bundle $\ODF$ is $\phi$-stable. Here $\ODF$ is the unique universal line bundle satisfying the following two conditions:
 \begin{enumerate}
 \item the line bundles $\omega^{-k}(a_1 p_1 + \ldots + a_n p_n)$  and $\ODF$ coincide on $\calM_{g,n}$ and
 \item the line bundle $\ODF$ is $\phi$-stable on $\calM_{g,n}^{\leq 1}$, the moduli stack of stable curves with at most one node.
 \end{enumerate}
 For $(g,n)$ and $(k; a_1, \ldots, a_n)$ and $\phi \in V_{g,n}^0$ as in Proposition~\ref{cor}, by arguing along the lines of \eqref{formulaclosed}, we obtain
\begin{equation}
[DR_{LGV}]=  c_g ( - \mathbb{R}q_* (\ODF)).
\end{equation}

\bibliographystyle{BRnote}

\bibliography{biblio}

\end{document}